\documentclass[12pt]{extarticle}

\usepackage[T1]{fontenc}

\usepackage{amsmath}
\usepackage{amssymb}
\usepackage{amsthm}
\usepackage{bm}

\usepackage{theoremref}
\usepackage[hidelinks, pdfencoding=auto]{hyperref}
\usepackage{cleveref}

\usepackage{enumitem}
\usepackage{abstract}
\usepackage[margin=3.5cm]{geometry}

\usepackage{tikz-cd}
\usepackage{graphicx}
\newcommand{\bigtimes}{\mathop{\vcenter{\hbox{\scalebox{1.8}{$\times$}}}}}

\newtheorem{theorem}{Theorem}[section]
\newtheorem{introtheorem}{Theorem}

\newtheorem{proposition}{Proposition}[section]
\newtheorem{lemma}{Lemma}[section]
\newtheorem{corollary}{Corollary}[section]
\theoremstyle{definition}
\newtheorem{example}{Example}[section]
\newtheorem{remark}{Remark}[section]

\usepackage{titlesec}
\titleformat*{\section}{\large\bfseries\filcenter}
\titleformat*{\subsection}{\normalsize\bfseries}

\newcommand{\ce}{\mathcal{E}}
\newcommand{\cl}{\mathcal{L}}
\newcommand{\cn}{\mathcal{N}}
\newcommand{\co}{\mathcal{O}}
\newcommand{\cq}{\mathcal{Q}}
\newcommand{\cz}{\mathcal{Z}}

\newcommand{\bp}{\mathbf{P}}
\newcommand{\bc}{\mathbf{C}}
\newcommand{\bq}{\mathbf{Q}}

\newcommand{\A}{\mathrm{A}}
\newcommand{\B}{\mathrm{B}} 
\renewcommand{\H}{\mathrm{H}}

\let\SS\S
\renewcommand{\S}{\mathrm{S}}

\newcommand{\Gr}{\operatorname{Gr}}
\newcommand{\Quot}{\operatorname{Quot}}
\newcommand{\Proj}{\operatorname{Proj}}
\newcommand{\Sym}{\operatorname{Sym}}

\begin{document}
	
\title{\textbf{Pl\"{u}cker degrees of Quot schemes}}
\author{SAMUEL STARK}
\date{}
\maketitle

\begin{abstract}
We study the Pl\"{u}cker degree of the main component of the Quot scheme 
of length $l$ quotients of a locally free sheaf on a smooth projective scheme $\S$ of dimension $d\geqslant 1$. 
This degree is determined by classes in the Chow ring of 
the symmetric product $\S^{(l)}$, which are given by the pushforward of the powers of $c_{1}(\co^{[l]})$ 
with respect to the canonical morphism from the Quot scheme to $\S^{(l)}$. 
We describe a decomposition of these classes, 
allowing us to compute the (in a certain sense) leading term of the Pl\"{u}cker degree.
We also obtain a higher-dimensional analogue of a classical result of Schubert.
\end{abstract}

\section*{Introduction}

Let $\ce$ be a locally free sheaf of rank $r$ on a $d$-dimensional smooth projective irreducible scheme $\S$ over $\bc$. 
Grothendieck's Quot scheme $\Quot^{l}_{\S}(\ce)$ of length $l$ quotients of $\ce$ simultaenously generalises 
two well-studied moduli spaces, the Grassmannian and the Hilbert scheme of points. 
In the course of his proof of existence of $\Quot^{l}_{\S}(\ce)$, 
Grothendieck \cite{Gro} defines, for a given sufficiently very ample invertible sheaf $\cl$ on $\S$, 
a closed embedding of $\Quot^{l}_{\S}(\ce)$ into the Grassmannian of $l$-dimensional quotients of $\H^{0}(\ce\otimes\cl)$.
Hence the Plücker embedding
\begin{equation*}
\varpi=\varpi^{l}:\Quot^{l}_{\S}(\ce) \rightarrow \bp(\Lambda^{l} \H^{0}(\ce\otimes\cl))
\end{equation*}
of $\Quot^{l}_{\S}(\ce)$ with respect to $\cl$. 
When $d=0$, $\Quot^{l}_{\S}(\ce)$ coincides with the Grassmannian, and the computation
of the degree of the Plücker embedding is one of the most classical results of enumerative geometry. 
Indeed, after invoking duality, it is a theorem of Schubert \cite{Sch} that
\begin{equation}\label{eq:Catalan}
\deg \varpi^{2} = \frac{ (2r-4)! } { (r-2)! (r-1)!},
\end{equation}
the $(r-2)$-th Catalan number. In \cite{Sch2} he proved, more generally, 
\begin{equation*}
\deg \varpi^{l} = \frac{ (l(r-l))! \prod_{k=1}^{l-1} k! }{ (r-l)! \prod_{k=1}^{l-1} (r-l+k)! }.
\end{equation*}
In this paper, we consider the problem of computing the degree of $\varpi$ for $d\geqslant 1$.

For general $r, d, l$ the scheme $\Quot^{l}_{\S}(\ce)$ can be reducible \cite{Iar,JeS}, 
and neither its dimension nor the number of components are known. 
The most natural way of addressing this issue is to restrict our attention to the main component $\Quot^{l}_{\S}(\ce)_{m}$ of $\Quot^{l}_{\S}(\ce)$,
which is the closure of the locus of quotients of the form $\ce\rightarrow\bigoplus_{i=1}^{l} \co_{s_{i}}$, 
where $s_{1}, \ldots, s_{l}$ are pairwise distinct points of $\S$. 
This is the component of $\Quot^{l}_{\S}(\ce)$ which is most interesting from the point of view of intersection theory; 
for arbitrary rank $r$, $\Quot^{l}_{\S}(\ce)$ is irreducible (and so $\Quot^{l}_{\S}(\ce) = \Quot^{l}_{\S}(\ce)_{m}$) 
when $l\leqslant 3$ or $d\leqslant 2$ \cite{Reg}. 
Denoting by $p=r-1+d$ the dimension of $\bp(\ce)$, 
the dimension of $\Quot^{l}_{\S}(\ce)_{m}$ is $lp$, 
and the Plücker degree of the main component can be written as
\begin{equation*}
\deg\varpi^{l}_{m} = \int_{\Quot^{l}_{\S}(\ce)_{m}} c_{1}(\cl^{[l]})^{lp}.
\end{equation*}
Here $\cl^{[l]}$ is the tautological sheaf associated to $\cl$, 
defined by taking the Fourier-Mukai transform with respect to the universal quotient sheaf.  
Integrals of this type have primarily been studied in the special situation when $\Quot^{l}_{\S}(\ce)$ is smooth:
for $d=1$ \cite{OpP, OpS, BLM}, and, more prominently, for the Hilbert scheme of points $\S^{[l]} = \Quot^{l}_{\S}(\co)$ 
of a surface $\S$ \cite{Leh, EGL, MOP}. 
As $\S^{[l]}$ is also a moduli space of sheaves on $\S$, 
the more general problem of determining the Hilbert polynomial $\chi(\det(\cl^{[l]})^{\otimes n})$ has been considered
\cite{EGL, GoM, Cav}, motivated by the Verlinde formula for curves.

Now the Quot scheme $\Quot^{l}_{\S}(\ce)$ and its main component are typically singular (e.g. when $r, l, d\geqslant 2$), 
and it is well-known that torus localisation, 
which is the key computational technique used in the smooth case, 
is more subtle for singular schemes \cite{EdG}. 
In the smooth case, torus localisation, combined with universality, 
allows one to phrase the problem purely combinatorially, 
while in the singular case it is less clear how to approach it at all 
(and there are much fewer results in this direction \cite{Ren, Ber, Sta}).

Our first result is the higher-dimensional analogue of (\ref{eq:Catalan}).
\begin{introtheorem}\label{firstthm}
For $d\geqslant 1$ we have
\begin{equation*}
\deg\varpi^{2} = 
\frac{1}{2}\binom{2p}{p}\left( \int_{\S} s_{d}(\ce\otimes\cl) \right)^{2} 
- 2^{p-1} \sum_{k=0}^{d} \int_{\S} s_{k}(\S) J_{d-k}(\ce\otimes\cl), 
\end{equation*}
where we have abbreviated
\begin{equation*}
J_{d-k}(\ce\otimes\cl) = \sum_{j=0}^{d-k} \frac{(-1)^{d-k}}{2^{d-j}} P^{p+d-j, -p-k-j}_{r-1+j}(0) s_{d-k-j}(\ce\otimes\cl) s_{j}(\ce\otimes\cl).
\end{equation*}
\end{introtheorem}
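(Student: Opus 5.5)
We plan to compute $\int_{\Quot^{2}_{\S}(\ce)} c_{1}(\cl^{[2]})^{2p}$ using an explicit geometric model of $\Quot^{2}_{\S}(\ce)$, which is irreducible for $l=2$. Write $\cf=\ce\otimes\cl$ and $X=\bp(\cf)$, a smooth projective variety of dimension $p$, with tautological quotient $\pi^{*}\cf\to\co_{X}(1)$. A length-$2$ quotient of $\ce$ is determined by one of two kinds of data. The first is two distinct points of $X$. The second is a point of $X$ together with a tangent direction at it that does not lie in the fibre direction. In the latter case the quotient is supported at one point $s$ and is either $\co_{\xi}$-type (a length-$2$ subscheme with a rank-one quotient) or $k_{s}^{2}$ (a $2$-dimensional quotient of $\ce_{s}$). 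We therefore expect a birational morphism
\[
\mathrm{Bl}_{\Delta}(X\times X)/\mathbb{Z}_{2} \;=\; X^{[2]} \dashrightarrow \Quot^{2}_{\S}(\ce).
\]
Under it, $\cl^{[2]}$ on $\Quot$ pulls back to the tautological rank-$2$ bundle $\co_{X}(1)^{[2]}$ on $X^{[2]}$, except over the locus of pairs of points lying in a common fibre of $X\to\S$. There the quotient $\co_{x}\oplus\co_{y}$ with $x\neq y$ in the fibre over $s$ gives $\ce\to k_{s}^{2}$, which depends only on the line $\overline{xy}\subset\bp(\cf_{s})$. So the map contracts the locus $F=\{(x,y):\pi(x)=\pi(y)\}$ to the Grassmannian bundle $\Gr(2,\cf)$.

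Step 1 is to verify that this gives a morphism $X^{[2]}\to\Quot^{2}_{\S}(\ce)$ which is birational, and that $\det \cl^{[2]}$ pulls back to $\det\co_{X}(1)^{[2]}$. The pullback statement holds because both sheaves are defined by pushing forward the universal quotient, and the universal family on $X^{[2]}$ pushes forward to a flat family of length-$2$ quotients of $\ce$. By the projection formula, $\deg\varpi^{2}=\int_{X^{[2]}}c_{1}(\co_{X}(1)^{[2]})^{2p}$. The map is generically injective off $F$, so the degree of the morphism is $1$.

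Step 2 is the standard computation on $X^{[2]}$. Write $h=c_{1}(\co_{X}(1))$ and let $E$ be the exceptional divisor of $\mathrm{Bl}_{\Delta}(X\times X)$. On the blow-up, $c_{1}(\co(1)^{[2]})$ pulls back to $h_{1}+h_{2}-E/2\cdot 2/2$, which in the double-cover normalisation reads $h_{1}+h_{2}-\tfrac12[E]$ after halving. Hence
\[
\deg\varpi^{2}=\tfrac12\int_{\mathrm{Bl}}(h_{1}+h_{2}-\delta)^{2p},\qquad 2\delta=[E].
\]
Expanding the binomial, the $\delta$-free part gives $\tfrac12\binom{2p}{p}(\int_{X}h^{p})^{2}$, and $\int_{X}h^{p}=\int_{\S}s_{d}(\cf)$ by the definition of Segre classes. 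This is the first term of the theorem.

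Step 3 handles the terms with positive powers of $\delta$. Each such term reduces to an integral over $E=\bp(T_{X})$ of the form $\int_{E}h^{a}\,\zeta^{b}$, where $\zeta=c_{1}(\co_{E}(E))=-c_{1}(\co_{\bp(T_X)}(1))$. This gives $\sum_{a,b}\binom{2p}{a}(\ldots)2^{-b}\int_{X}h^{a}s_{p-a}(T_{X})\cdot(\pm1)$. We then need the Segre classes of $T_{X}$. From the Euler sequence, $c(T_{X})=\pi^{*}c(T_{\S})\,c(\pi^{*}\cf^{\vee}\otimes\co(1))$ up to the standard dualisation conventions. Pushing forward $h^{a}s_{p-a}(T_{X})$ to $\S$ then yields polynomials in $s_{k}(\S)$ and in the Segre classes $s_{j}(\cf)$.

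The main obstacle is organising this double sum into the closed form $-2^{p-1}\sum_{k}\int s_{k}(\S)J_{d-k}$ with the Jacobi polynomial coefficients $P^{p+d-j,-p-k-j}_{r-1+j}(0)$. We plan to expand $s(\pi^{*}\cf^{\vee}(1))$ through $\sum_{i}c_{i}(\cf^{\vee})(1+h)^{-r-i}$-type expansions and push forward using $\pi_{*}h^{r-1+j}=s_{j}(\cf)$. The resulting alternating binomial sums in $a$ should then be identified with $P^{\alpha,\beta}_{n}(0)=2^{-n}\sum_{m}\binom{n+\alpha}{m}\binom{n+\beta}{n-m}(-1)^{m}$. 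We also need to check that the quadratic dependence on Segre classes, namely $s_{d-k-j}\,s_{j}$, comes out as stated; the factor $s_{d-k-j}$ arises from a further Segre expansion of the pushforward. Finally, we will sanity-check the formula against $d=0$, where it should recover the Catalan number, and against $\S=\bp^{1}$, $\ce=\co$, where $\Quot=\bp^{2}$.
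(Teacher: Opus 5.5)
Your overall route is the same as the paper's: the birational morphism $\bp(\ce)^{[2]}\to\Quot^{2}_{\S}(\ce)$ pulling $\cl^{[2]}$ back to $\cl(1)^{[2]}$ (the paper quotes this from earlier work), then the blow-up computation on the Hilbert square of $X=\bp(\ce\otimes\cl)$, then the Segre classes of $T_X$ via the Euler sequence pushed down to $\S$, and finally a Jacobi-polynomial identification.

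\textbf{The error in Step~2.} As written, Step~2 contains a normalisation error that makes the final formula false. On $\mathrm{Bl}_{\Delta}(X\times X)=X^{[1,2]}$, the pullback of $\co_X(1)^{[2]}$ is an extension of $\phi_2^{\ast}\co_X(1)$ by $\phi_1^{\ast}\co_X(1)\otimes\co(-E)$; the latter is the kernel of evaluation at the marked point. So $c_1$ pulls back to $h_1+h_2-[E]$, not $h_1+h_2-\tfrac12[E]$.

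You can also see this from $X^{[2]}$ directly. There $c_1(\co^{[2]})=-\tfrac12[B]$, with $B$ the boundary divisor. But the double cover $\mathrm{Bl}_{\Delta}(X\times X)\to X^{[2]}$ is branched along $E$, so $B$ pulls back to $2E$ and the $\tfrac12$ cancels. The correct starting point is therefore $\deg\varpi^{2}=\tfrac12\int_{\mathrm{Bl}}(h_1+h_2-[E])^{2p}$.

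With your $\delta=\tfrac12[E]$, each term containing $[E]^{p+m}$ acquires an extra factor $2^{-p-m}$. Writing $I_m=\int_X h^{p-m}s_m(T_X)$, the correction becomes $-\tfrac12\sum_m\binom{2p}{p+m}4^{-m}I_m$ instead of $-2^{p-1}\sum_m\binom{2p}{p+m}2^{-m}I_m$, and no rearrangement turns the former into the stated formula.

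Concretely, take $\S=\bp_{1}$, $\ce=\co$, $\cl=\co(n)$. Then $\mathrm{Bl}=\bp_1\times\bp_1$, $E=\Delta$ and $E^2=2$. Your normalisation gives $(n-\tfrac12)^2$, whereas both the theorem and the true value $\int_{\bp_2}c_1(\co(n)^{[2]})^2$ give $(n-1)^2$. Your planned $\bp_1$ check would expose this, but the step as stated is wrong.

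\textbf{Two smaller points.}

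First, the $d=0$ sanity check is misconceived. When $\S$ is a point, the locus $F$ is all of $X^{[2]}$, and the map to the Grassmannian is not birational, since $\dim\Gr^{2}(\bc^{r})=2(r-2)<2p$. The right-hand side of the theorem then computes $\int_{X^{[2]}}c_1(\co(1)^{[2]})^{2p}=0$; for $r=2$ it is $1-P^{1,-1}_{1}(0)=0$. It does not give the Catalan number, which is exactly why $d\geqslant 1$ is assumed.

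Second, the identity you quote, $2^{-n}\sum_m(-1)^m\binom{n+\alpha}{m}\binom{n+\beta}{n-m}$, equals $P^{\beta,\alpha}_n(0)=(-1)^nP^{\alpha,\beta}_n(0)$. Using it would introduce spurious signs $(-1)^{r-1+j}$ into the coefficients. The paper instead matches $a_j=(-1)^{k+j}\sum_m(-\tfrac12)^m\binom{2p}{p+m}\binom{r-1+m-k}{m-d+j}$ directly against the expansion $P^{\alpha,\beta}_n(z)=\sum_m\bigl(\tfrac{z-1}{2}\bigr)^m\binom{n+\alpha}{m+\alpha}\binom{n+\alpha+\beta+m}{m}$.
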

Here $P^{p+d-j, -p-k-j}_{r-1+j}(z)$ denotes the Jacobi polynomial. 
The proof uses the birational morphism
\begin{equation*}
\bp(\ce)^{[2]}\rightarrow\Quot^{2}_{\S}(\ce)   
\end{equation*}
constructed by the author for $d=2$ in \cite{Sta}. 
While we expect that a similar construction can be made for $l=3$, 
for higher $l$ this approach does not seem fruitful.

Instead, we consider the symmetric product $\pi:\S^{l}\rightarrow\S^{(l)} = \S^{l}/\mathfrak{S}_{l}$ of $\S$. 
A fundamentally important construction of Grothendieck \cite{Gro} turns $\Quot^{l}_{\S}(\ce)$ into a scheme over $\S^{(l)}$: 
there is a canonical morphism
\begin{equation*}
\mu:\Quot^{l}_{\S}(\ce)\rightarrow\S^{(l)},
\end{equation*}
which takes a point of $\Quot^{l}_{\S}(\ce)$ to the $0$-cycle of the corresponding quotient sheaf. 
For $\ce=\co$, $\mu$ is usually referred to as the Hilbert-Chow morphism, 
and has played a crucial role in the study of Hilbert schemes of points, 
especially for $d=2$.

Restricting to the main component, we define the classes
\begin{equation*}
\mu^{l}_{k}(\ce) = \mu_{\ast} c_{1}(\co^{[l]})^{l(r-1) + k} \in \A^{k}(\S^{(l)}).
\end{equation*}
Here and elsewhere, the Chow groups are understood to be taken with $\bq$-coefficients. 
These classes are reminiscent of the Segre classes --- in fact, $\mu^{1}_{k}(\ce) = (-1)^{k}s_{k}(\ce)$ ---, 
with which they share certain properties; 
when $\cl$ is an invertible sheaf, there is a simple formula for $\mu^{l}_{k}(\ce\otimes\cl)$ in terms of 
the powers of $c_{1}(\cl^{(l)})$ and the $\mu^{l}_{j}(\ce)$ ($0\leqslant j\leqslant k$).
(Here $\cl^{(l)}$ is the invertible sheaf on $\S^{(l)}$ induced by $\cl$ through symmetrization.) 
Our interest in these classes stems from 
\begin{equation*}
\deg\varpi^{l}_{m} = \int_{\S^{(l)}} \mu_{ld}^{l}(\ce\otimes\cl).
\end{equation*}
A natural problem, which refines and is more difficult than the initial numerical problem, 
is then to find an explicit description of the classes $\mu^{l}_{k}(\ce)$.
By comparing $\mu$ to $\bp(\ce)^{(l)}\rightarrow\S^{(l)}$, 
we exhibit a decomposition of $\mu^{l}_{k}(\ce)$ into an explicit class and a class
$\delta^{l}_{k}(\ce)$ coming from the complement $\Delta\subset\S^{(l)}$ of configuration space.
\begin{introtheorem}\label{lastthm}
The class
\begin{equation*}
\delta^{l}_{k}(\ce) =
\mu^{l}_{k}(\ce) - \frac{(-1)^{k}}{l!} \sum_{k_{1}+\cdots+k_{l} = k} 
\frac{(l(r-1)+k)!}{\prod_{m=1}^{l}(r-1+k_{m})!} \pi_{\ast}\bigtimes_{m=1}^{l} s_{k_{m}}(\ce)
\end{equation*}
is in the image of $\A^{k-d}(\Delta)\rightarrow\A^{k}(\S^{(l)})$.
\end{introtheorem}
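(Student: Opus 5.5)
The plan is to compare $\mu$ with the morphism $\nu:\bp(\ce)^{(l)}\rightarrow\S^{(l)}$ over the open subset $U=\S^{(l)}\setminus\Delta$ of reduced configurations, and then to use the localisation sequence
\begin{equation*}
\A_{\ast}(\Delta)\rightarrow\A_{\ast}(\S^{(l)})\rightarrow\A_{\ast}(U)\rightarrow 0 .
\end{equation*}
Since $\mu^{l}_{k}(\ce)$ has dimension $ld-k$, the corresponding class on $\Delta$ lives in $\A_{ld-k}(\Delta)=\A^{k-d}(\Delta)$, because $\Delta$ has dimension $ld-d$. So it suffices to show that the restriction of $\delta^{l}_{k}(\ce)$ to $U$ vanishes.

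\textbf{Step 1: identify $\mu$ over $U$.} Over $U$, the main component and the full Quot scheme agree. A quotient supported at $l$ distinct points $s_{i}$ is a direct sum of one-dimensional quotients $\ce_{s_{i}}\rightarrow\bc$. This gives an isomorphism
\begin{equation*}
\mu^{-1}(U)\cong\nu^{-1}(U),
\end{equation*}
where $\nu^{-1}(U)\cong(\bp(\ce)^{l}\setminus\text{diagonals over }\S)/\mathfrak{S}_{l}$, and the action is free there. Under this isomorphism $\co^{[l]}$ corresponds to the descent of $\bigoplus_{m}\mathrm{pr}_{m}^{\ast}\co_{\bp(\ce)}(1)$. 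Hence $c_{1}(\co^{[l]})$ pulls back to $\sum_{m}h_{m}$ on $\bp(\ce)^{l}$, where $h_{m}=\mathrm{pr}_{m}^{\ast}c_{1}(\co(1))$.

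\textbf{Step 2: compute on $\bp(\ce)^{l}$.} Let $\rho:\bp(\ce)^{l}\rightarrow\S^{(l)}$ be the composite of $q^{l}:\bp(\ce)^{l}\rightarrow\S^{l}$ with $\pi$. This map is generically of degree $l!$ over $\nu$, so over $U$
\begin{equation*}
\mu_{\ast}c_{1}(\co^{[l]})^{N}=\frac{1}{l!}\,\pi_{\ast}q^{l}_{\ast}\Bigl(\sum_{m}h_{m}\Bigr)^{N},\qquad N=l(r-1)+k .
\end{equation*}
Expand by the multinomial theorem and use $q_{\ast}h^{r-1+j}=(-1)^{j}s_{j}(\ce)$, the convention matching $\mu^{1}_{k}(\ce)=(-1)^{k}s_{k}(\ce)$. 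Terms with some exponent below $r-1$ push forward to zero. Writing the remaining exponents as $r-1+k_{m}$ gives exactly
\begin{equation*}
\frac{(-1)^{k}}{l!}\sum_{k_{1}+\cdots+k_{l}=k}\frac{N!}{\prod_{m}(r-1+k_{m})!}\,\pi_{\ast}\bigtimes_{m=1}^{l}s_{k_{m}}(\ce).
\end{equation*}

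\textbf{Step 3: compatibility with restriction.} Restriction to the open set $U$ commutes with proper pushforward via flat pullback along open immersions. It also commutes with Chern classes, since the line bundles agree on $\mu^{-1}(U)$ and on its preimage in $\bp(\ce)^{l}$. Therefore $\delta^{l}_{k}(\ce)|_{U}=0$, and the localisation exact sequence gives the claim. Rational coefficients are used when dividing by $l!$ and when working on the quotient $\S^{(l)}$.

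\textbf{Main obstacle.} The hardest part is Step 1: checking that the isomorphism $\mu^{-1}(U)\cong\nu^{-1}(U)$ holds scheme-theoretically, with the tautological sheaf matching, so that Chern class computations can be transported. I would first prove it étale-locally, using the fact that $\pi$ is étale over $U$. Pulling back along $\pi$ to the open subset of $\S^{l}$ with distinct points, the universal quotient splits canonically into $l$ rank-one quotients supported on the graphs of the projections. This gives a morphism to $\bp(\ce)^{l}$, and its inverse is given by direct sum. Both are $\mathfrak{S}_{l}$-equivariant, so they descend to the quotient.
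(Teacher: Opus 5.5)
Your proposal is correct and follows the paper's proof: the paper likewise compares $\mu$ with $\nu:\bp(\ce)^{(l)}\rightarrow\S^{(l)}$ over the configuration space $\B(\S,l)$, computes the comparison class by the same multinomial expansion on $\bp(\ce)^{l}$, and concludes with the excision sequence. The only difference is minor. The paper builds the isomorphism $\nu^{-1}(\B(\S,l))\cong\mu^{-1}(\B(\S,l))$ through the Hilbert scheme $\bp(\ce)^{[l]}$, and matches first Chern classes via the rational vanishing of $c_{1}(\co^{[l]})$ on $\B(\bp(\ce),l)$ and the norm relation. You instead verify both the isomorphism and $c_{1}=\sum_{m}h_{m}$ directly on the ordered \'etale cover, which makes the scheme-theoretic step more transparent than the paper's ``bijection plus smoothness'' argument.
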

For degree reasons, this result explicitly determines $\mu^{l}_{k}(\ce)$ for $k<d$, 
as well as $\mu^{l}_{d}(\ce)$ up to a constant. 
It implies
\begin{equation*}
\deg\varpi^{l}_{m} = (-1)^{ld}\frac{(lp)!}{l! p!^{l}}\left(\int_{\S} s_{d}(\ce\otimes\cl)\right)^{l} 
+ \int_{\S^{(l)}} \delta^{l}_{ld}(\ce\otimes\cl),
\end{equation*}
recovering the leading term in \Cref{firstthm}.

\section{Some results of Grothendieck}

\subsection{Notations}

The ideas and results presented in this section come from Grothendieck's paper \cite{Gro}. 
Our notations are essentially the same as in \cite{Sta}, which we briefly recall for the benefit of the reader.
We denote by $\ce$ a locally free sheaf of rank $r$ on a smooth projective irreducible $d$-dimensional scheme $\S$ over $\bc$. 

The Quot scheme $\Quot^{l}_{\S}(\ce)$ is the fine moduli scheme of length $l$ coherent sheaf quotients of $\ce$; 
a point $q$ of $\Quot^{l}_{\S}(\ce)$ thus corresponds to a quotient $\ce\rightarrow\cq_{q}$ of coherent sheaves, 
where $\cq_{q}$ satisfies $\dim(\cq_{q})=0$ and $\dim\H^{0}(\cq_{q})=l$. 
As we have indicated in the introduction, 
we are primarily interested in the main component of $\Quot^{l}_{\S}(\ce)$, 
i.e. the closure $\Quot^{l}_{\S}(\ce)_{m}$ of the locus of points $q$ with $\cq_{q}=\bigoplus_{i=1}^{l} \co_{s_{i}}$. 
It has been understood for a long time \cite{Iar} that the Quot scheme can be very complicated when the dimension $d$ is large, 
and that the main component (sometimes also referred to as the principal or geometric component) is better behaved \cite{Hai, EkS, Ber}; 
for us, it is important that is has the right dimension, i.e. $lp$ with $p=r-1+d$.
(We refer to \cite{JeS} for a study of other components.)

\subsection{Plücker embedding}

Let $\cl$ be an invertible sheaf on $\S$, 
and $\cq$ the universal quotient sheaf on $\S\times\Quot^{l}_{\S}(\ce)$.
The tautological sheaf $\cl^{[l]}$ on $\Quot^{l}_{\S}(\ce)$ is the Fourier-Mukai 
transform of $\cl$ with kernel $\cq$. 
This sheaf is locally free of rank $l$, with fibre 
\begin{equation*}
\cl^{[l]}(q) = \H^{0}(\cl\otimes\cq_{q})  
\end{equation*}
over a point $q$ of $\Quot^{l}_{\S}(\ce)$.  
We do not distinguish notationally between $\cl^{[l]}$ and its restriction to the main component. 
We will frequently use the fact that the formation of $\cl^{[l]}$ is compatible with base change
(in the sense of \cite[Lemma 2.2]{Sta}), 
in particular, that under the canonical isomorphism
\begin{equation}\label{eq:twisting}
\Quot^{l}_{\S}(\ce)\xrightarrow{\sim}\Quot^{l}_{\S}(\ce\otimes\cl)
\end{equation}
taking $\ce\rightarrow\cq_{q}$ to $\ce\otimes\cl\rightarrow\cq_{q}\otimes\cl$, 
the sheaf $\cl^{[l]}$ on $\Quot^{l}_{\S}(\ce)$ corresponds to $\co^{[l]}$ on $\Quot^{l}_{\S}(\ce\otimes\cl)$ \cite[Proposition 2.2]{Sta}. 
It is clear that (\ref{eq:twisting}) takes the main component of $\Quot^{l}_{\S}(\ce)$ to the main component of $\Quot^{l}_{\S}(\ce\otimes\cl)$.

Consider now the canonical morphism of sheaves
\begin{equation*}
\phi_{\cl}:\H^{0}(\ce\otimes\cl)\otimes\co\rightarrow\cl^{[l]}.
\end{equation*}
Grothendieck \cite[\SS 3]{Gro} proved the following result.
\begin{theorem}[Grothendieck]
There exists a very ample invertible sheaf $\cl$ on $\S$ such that 
$\phi_{\cl}$ is surjective and the induced morphism
\begin{equation}\label{eq:grassemb}
\Quot^{l}_{\S}(\ce)\rightarrow\Gr^{l}(\H^{0}(\ce\otimes\cl))
\end{equation}
is a closed immersion.
\end{theorem}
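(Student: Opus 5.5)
The plan follows Grothendieck's classical argument. The first step is to reduce to the case $\ce=\co$-twisted by boundedness. Fix an ample $\co_{\S}(1)$. Every length-$l$ quotient $\ce\rightarrow\cq_{q}$ has kernel $\mathcal{K}_{q}$, and these form a bounded family. The Hilbert polynomial of $\mathcal{K}_{q}$ is $P_{\ce}(n)-l$, so it is fixed. By Castelnuovo--Mumford regularity there is an $m_{0}$, depending only on $\ce$, $l$ and $\S$, such that for $m\geqslant m_{0}$, with $\cl=\co_{\S}(m)$, two things hold:
\begin{enumerate}[label=(\roman*)]
\item $\ce\otimes\cl$ and every $\mathcal{K}_{q}\otimes\cl$ are globally generated;
\item $\H^{1}(\mathcal{K}_{q}\otimes\cl)=0$ and $\H^{1}(\ce\otimes\cl)=0$.
\end{enumerate}
The uniform regularity bound for the kernels is the Mumford--Gotzmann type bound for subsheaves with fixed Hilbert polynomial, or more concretely the bound for ideal-like subsheaves of $\ce$ with colength $l$. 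I expect this to be the main technical obstacle. For the reduced problem one can argue directly: a length-$l$ quotient of $\ce$ is "separated" by sections once $\cl$ is $l$-very ample in a suitable sense, so that $\H^{0}(\ce\otimes\cl)\rightarrow\H^{0}(\ce\otimes\cl\otimes\co_{Z})$ is surjective for all $Z$ of length $\leqslant l$. The kernel statement, that $\mathcal{K}_{q}\otimes\cl$ is generated by global sections, then follows by showing the kernels are $m$-regular via an induction on $l$ using short exact sequences $0\rightarrow\mathcal{K}'\rightarrow\mathcal{K}\rightarrow\co_{s}\rightarrow0$.

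The second step is surjectivity of $\phi_{\cl}$. From $0\rightarrow\mathcal{K}_{q}\otimes\cl\rightarrow\ce\otimes\cl\rightarrow\cq_{q}\otimes\cl\rightarrow0$ and the vanishing of $\H^{1}(\mathcal{K}_{q}\otimes\cl)$, the map $\H^{0}(\ce\otimes\cl)\rightarrow\H^{0}(\cq_{q}\otimes\cl)=\cl^{[l]}(q)$ is surjective at every point $q$. By Nakayama this gives surjectivity of $\phi_{\cl}$. Hence we obtain a morphism (\ref{eq:grassemb}) sending $q$ to the quotient $\H^{0}(\ce\otimes\cl)\rightarrow\H^{0}(\cq_{q}\otimes\cl)$, with kernel $\H^{0}(\mathcal{K}_{q}\otimes\cl)$. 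This works in families over any base $T$, using cohomology and base change together with the universal quotient $\cq$, which is flat over $T$ with fibres of length $l$, so $\cl^{[l]}$ is locally free.

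The third step is the closed immersion. The strategy is to construct the inverse on a closed subscheme of the Grassmannian. Given a $T$-point $V\subset\H^{0}(\ce\otimes\cl)\otimes\co_{T}$ of corank $l$, take the image $\mathcal{K}_{V}$ of $V\otimes\cl^{-1}\rightarrow\ce_{T}$ and set $\cq_{V}=\ce_{T}/\mathcal{K}_{V}$. The locus where $\cq_{V}$ is $T$-flat with Hilbert polynomial $l$ is a locally closed subscheme, by the flattening stratification. Properness of $\Quot^{l}_{\S}(\ce)$, via the valuative criterion applied to length-$l$ quotients, then makes the image closed. Global generation (i) shows that $V=\H^{0}(\mathcal{K}_{q}\otimes\cl)$ recovers $\mathcal{K}_{q}$ as the image, and this holds functorially. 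So the composite $\Quot\rightarrow\Gr\rightarrow\Quot$ is the identity, and the morphism is a monomorphism that is proper, hence a closed immersion. Finally, replacing $m$ by a larger value keeps everything valid, so $\cl$ may also be taken very ample.
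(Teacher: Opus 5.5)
Your proposal is correct and is essentially Grothendieck's argument, which the paper simply cites (adding only that $\cl=\co(n)$ with $n\gg 0$ works). Its steps are: a uniform regularity bound for the kernels, which for zero-dimensional quotients does follow from your induction on $l$ (the regularity rises by at most one per step, since global generation makes $\H^{0}(\mathcal{K}(m))\rightarrow\H^{0}(\co_{s})$ onto); then $\H^{1}$-vanishing for the surjectivity of $\phi_{\cl}$; and relative global generation plus properness of $\Quot^{l}_{\S}(\ce)$, giving a proper monomorphism and hence a closed immersion.

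Your aside about $\cl$ being ``$l$-very ample'' is unnecessary, and read literally it would not suffice. A length-$l$ quotient of $\ce$ need not factor through $\ce\otimes\co_{Z}$ with $Z$ of length $\leqslant l$: for $d\geqslant 4$, $r\geqslant 2$ its annihilator can have colength larger than $l$. The $\H^{1}$-vanishing route you actually use is the right one.
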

The proof shows that one can take $\cl=\co(n)$ for $n\gg 0$, where $\co(1)$ is a very ample invertible sheaf on $\S$.
Composing (\ref{eq:grassemb}) with the Plücker embedding of $\Gr^{l}(\H^{0}(\ce\otimes\cl))$ defines
the Plücker embedding
\begin{equation*}
\varpi:\Quot^{l}_{\S}(\ce) \rightarrow\bp(\Lambda^{l} \H^{0}(\ce\otimes\cl))
\end{equation*}
of $\Quot^{l}_{\S}(\ce)$ associated to $\cl$. 
By construction, 
\begin{equation*}
\varpi^{\ast}\co(1) = \det(\cl^{[l]}).
\end{equation*}
In particular, the Plücker degree of the main component is
\begin{equation*}
\deg\varpi_{m} = \int_{\Quot^{l}_{\S}(\ce)_{m}}c_{1}(\cl^{[l]})^{lp}.
\end{equation*}
In this paper, we concern ourselves with this integral regardless of any positivity assumptions on $\cl$; 
effectively determining when $\det(\cl^{[l]})$ is very ample is an interesting question in its own right, 
which we do not address.

\subsection{Canonical morphism}

Consider now the symmetric product
\begin{equation*}
\pi:\S^{l} \rightarrow \S^{(l)} = \S^{l} / \mathfrak{S}_{l}.
\end{equation*}
We denote by 
\begin{equation*}
\B(\S, l) \subset \S^{(l)} \quad \mathrm{and} \quad \Delta \subset \S^{(l)} 
\end{equation*}
the configuration space of $l$ unordered points and its complement, respectively. 
For $d\geqslant 2$, $\B(\S, l)$ is the smooth locus of $\S^{(l)}$ \cite[\SS 2]{Fog}.

Grothendieck \cite[\SS 6]{Gro} introduced a canonical morphism 
\begin{equation}\label{eq:norm}
\mu:\Quot^{l}_{\S}(\ce) \rightarrow \S^{(l)},
\end{equation}
taking $q$ to the $0$-cycle $\mu(q)$ of $\cq_{q}$. 
In the Hilbert scheme case, $\mu$ is an isomorphism over $\B(\S, l)$, 
for $d=2$ even a resolution of singularities.
(When $d\geqslant 3$, it is a long-standing problem to construct an 
explicit resolution of singularities of $\S^{(l)}$ \cite[\SS 1]{FuM}.)
We have 
\begin{equation*}
\Quot^{l}_{\S}(\ce)_{m} = \overline{\mu^{-1}(\B(\S, l))}.
\end{equation*}
The construction of $\mu$ involves norms and is somewhat subtle;
for Hilbert schemes a detailed account (which readily generalises to Quot schemes) 
can be found in \cite{EkS}. 

We view $\mu$ as the structural morphism of $\Quot^{l}_{\S}(\ce)$ as a scheme over $\S^{(l)}$;
(\ref{eq:twisting}) is then an isomorphism of schemes over $\S^{(l)}$, 
and it is well-known that $\Quot^{1}_{\S}(\ce)$ is canonically isomorphic to 
$\bp(\ce)=\Proj(\Sym(\ce))$ as a scheme over $\S^{(1)}=\S$, 
with $\co^{[1]}$ corresponding to $\co(1)$.
\section{The degree of \texorpdfstring{$\bm{\varpi^{2}}$}{varpi2}}

\subsection{Relation to the Hilbert scheme of \texorpdfstring{$\bm{\bp(\ce)}$}{P}.}

In this section, we prove \Cref{firstthm}. 
The proof relies on the following construction.
\begin{theorem}
There is a birational morphism of schemes
\begin{equation*}
\rho:\bp(\ce)^{[2]}\rightarrow\Quot^{2}_{\S}(\ce),
\end{equation*}
satisfying $\rho^{\ast}\cl^{[2]} = \cl(1)^{[2]}$.
\end{theorem}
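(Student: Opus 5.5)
We construct $\rho$ through the moduli functor of $\Quot^{2}_{\S}(\ce)$: to a flat family of length $2$ subschemes of $\bp(\ce)$ we attach a flat family of length $2$ quotients of $\ce$. Let $p:\bp(\ce)\to\S$ be the projection, so that $p_{\ast}\co(1)=\ce$ and there is a surjection $p^{\ast}\ce\to\co(1)$. Let $Z\subset\bp(\ce)\times T$ be a flat family of length $2$ subschemes over $T$, with structure sheaf $\co_{Z}$. On $\S\times T$ we define the composite
\begin{equation*}
\ce\boxtimes\co_{T}\rightarrow (p\times 1)_{\ast}\big(\co(1)\boxtimes\co_{T}\big)\rightarrow (p\times 1)_{\ast}\big(\co(1)\otimes\co_{Z}\big).
\end{equation*}
Since $Z$ is finite over $T$, $(p\times1)_{\ast}(\co(1)\otimes\co_{Z})$ is flat over $T$ of relative length $2$. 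Its formation commutes with base change because pushforward along finite morphisms does.

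The main point is surjectivity. By Nakayama's lemma it is enough to check it on fibres, that is, for a single length $2$ subscheme $Z\subset\bp(\ce)$. We split into two cases.
\begin{enumerate}[label=(\roman*)]
\item $p|_{Z}$ is injective on points. Then the target is $\co(1)|_{z_{1}}\oplus\co(1)|_{z_{2}}$ over distinct points $s_{1},s_{2}$ of $\S$. Each factor is a surjective image of $\ce(s_{i})$, so the map is surjective.
\item $Z$ lies over a single point $s$. Then $Z$ is either two points in the fibre $\bp(\ce(s))$ or a tangent vector. The map factors through $\ce\otimes\co_{\S,s}/\mathfrak{m}_{s}^{2}$, and we must show that its image surjects onto $\H^{0}(\co(1)|_{Z})$.
 \begin{itemize}
 \item If $Z$ is contained in the fibre, then $\ce(s)\to\H^{0}(\co_{\bp^{r-1}}(1)|_{Z})$ is surjective, because a linear projective space embedded by $\co(1)$ separates length $2$ subschemes.
 \item If $Z$ is not contained in the fibre, then $p|_{Z}:Z\to\S$ is a closed immersion onto a length $2$ subscheme $W$. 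Locally trivialising $\ce$ and $\co(1)$ near $z$ shows that $\ce|_{W}\to\co(1)|_{Z}$ is a surjection of invertible $\co_{W}$-modules, because it is surjective at the closed point.
 \end{itemize}
\end{enumerate}
This gives a natural transformation of functors, and hence the morphism $\rho$.

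For the identity $\rho^{\ast}\cl^{[2]}=\cl(1)^{[2]}$, we use base change for tautological sheaves (\cite[Lemma 2.2]{Sta}). The pullback of the universal quotient along $1\times\rho$ is $(p\times1)_{\ast}(\co(1)\otimes\co_{\cz})$, where $\cz$ is the universal subscheme. Applying the projection formula and pushing forward to $\bp(\ce)^{[2]}$ gives
\begin{equation*}
\rho^{\ast}\cl^{[2]}=\mathrm{pr}_{\ast}\big(p^{\ast}\cl\otimes\co(1)\otimes\co_{\cz}\big)=\cl(1)^{[2]}.
\end{equation*}

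Birationality comes from comparing open dense loci. On the locus of pairs of points with distinct images in $\S$, the construction is inverse to the map sending $\ce\to\co_{s_{1}}\oplus\co_{s_{2}}$ to the two points of $\bp(\ce)$ given by the rank one quotients. This locus is dense in $\bp(\ce)^{[2]}$, which is irreducible of dimension $2p$. Its image is the dense open set $\mu^{-1}(\B(\S,2))$ of $\Quot^{2}_{\S}(\ce)$, which is irreducible because $l\le 3$ \cite{Reg}. Both sides are reduced there, so $\rho$ is an isomorphism over $\B(\S,2)$ and is birational.

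The step I expect to be the main obstacle is the surjectivity check in case (ii) when $Z$ is a tangent vector transverse to the fibre. The local computation needs some care, in particular in tracking the trivialisation of $\co(1)$ to first order along $W$.
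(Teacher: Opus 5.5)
Your proposal is correct and follows essentially the paper's route. The paper defers to \cite[Theorem 3.3]{Sta}, and the same construction appears in its proof of \Cref{configspace}: push forward $\co(1)\rightarrow\co_{\cz}(1)$ along $f\times 1$, check surjectivity (your length-two case analysis is exactly what makes this work for $l=2$), then use base change for tautological sheaves with the projection formula. One small correction: a bijective morphism onto a reduced scheme need not be an isomorphism (e.g.\ the normalisation of a cuspidal curve), so the isomorphism over $\B(\S,2)$ should come, as in the paper, from smoothness (hence normality) of $\mu^{-1}(\B(\S,2))$ via Zariski's main theorem; for birationality alone, generic injectivity onto an integral target in characteristic zero already suffices.
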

This theorem was proven by the author for a surface \cite[Theorem 3.3]{Sta},
but the construction works (without changes) for any $d\geqslant 1$. 
(The morphism $\rho$ is a resolution of singularities for $d\geqslant 2$; 
it was also used in the recent paper \cite{Pie}.) 
It implies 
\begin{equation}\label{eq:deg2}
\deg\varpi^{2} = \int_{\Quot^{2}_{\S}(\ce)}c_{1}(\cl^{[2]})^{2p} = \int_{\bp(\ce)^{[2]}}c_{1}(\cl(1)^{[2]})^{2p}.
\end{equation}

\subsection{Integrals over \texorpdfstring{$\bm{\S^{[2]}}$}{S2} and \texorpdfstring{$\bm{\bp(\ce)}$}{PE}}

As suggested by (\ref{eq:deg2}), we must first consider the Hilbert scheme case.
\begin{lemma}\label{rank1}
For any invertible sheaf $\cl$ on $\S$
\begin{equation*}
\int_{\S^{[2]}}c_{1}(\cl^{[2]})^{2d} = \frac{1}{2}\binom{2d}{d}\left(\int_{\S}c_{1}(\cl)^{d}\right)^{2} 
- 2^{d-1}\sum_{m=0}^{d}\binom{2d}{d+m} \frac{1}{2^{m}}\int_{\S}c_{1}(\cl)^{d-m}s_{m}(\S).
\end{equation*}
\end{lemma}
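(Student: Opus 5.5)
We compute via the standard description of $\S^{[2]}$ as the quotient of the blow-up. Let $\B=\mathrm{Bl}_{\Delta}(\S\times\S)$ with exceptional divisor $E\cong\bp(T_{\S})$. The involution lifts to $\B$, and $\B/\mathfrak{S}_{2}\cong\S^{[2]}$, with the quotient map $q:\B\rightarrow\S^{[2]}$ finite of degree $2$. We will use the following identity, which holds on $\B$:
\begin{equation*}
q^{\ast}c_{1}(\cl^{[2]}) = p_{1}^{\ast}c_{1}(\cl)+p_{2}^{\ast}c_{1}(\cl) - \tfrac{1}{2}[E]\cdot 2 /2 ,
\end{equation*}
more precisely $q^{\ast}\det\cl^{[2]} \cong p_{1}^{\ast}\cl\otimes p_{2}^{\ast}\cl(-E)$. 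To justify it, we use the exact sequence $0\rightarrow q^{\ast}\cl^{[2]}\rightarrow p_{1}^{\ast}\cl\oplus p_{2}^{\ast}\cl\rightarrow p^{\ast}\cl|_{E}\rightarrow 0$ on $\B$; this is the standard description of the pullback of the tautological bundle to the universal family. Writing $x=p_{1}^{\ast}c_{1}(\cl)+p_{2}^{\ast}c_{1}(\cl)$ and $e=[E]$, we then get
\begin{equation*}
\int_{\S^{[2]}}c_{1}(\cl^{[2]})^{2d}=\frac{1}{2}\int_{\B}(x-e)^{2d}.
\end{equation*}

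Next we expand by the binomial theorem. The $e^{0}$ term gives $\frac12\int_{\S\times\S}x^{2d}=\frac12\binom{2d}{d}(\int_\S c_1(\cl)^d)^2$, which is the leading term of the lemma. For $j\geqslant 1$, a term $x^{2d-j}e^{j}$ restricts to $E$. On $E$ the class $x$ pulls back to $2c_{1}(\cl)$ from the diagonal $\S$, and $e|_{E}=-\zeta$, where $\zeta=c_{1}(\co_{\bp(T_{\S})}(1))$ in the convention where $\co(-1)$ is the tautological subbundle, i.e.\ $N_{E}=\co(-1)$. Hence
\begin{equation*}
\int_{\B}x^{2d-j}e^{j}=\int_{E}(2c_{1}(\cl))^{2d-j}(-\zeta)^{j-1}.
\end{equation*}
The projection $E\rightarrow\S$ has fibres $\bp^{d-1}$, so pushing $\zeta^{j-1}$ forward gives a Segre class of $T_{\S}$ of degree $j-d$, with a sign fixed by the convention. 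We arrange this so that it matches $s_m(\S)$ with $m=j-d$, using the same sign convention for $s(\S)$ as the paper uses for $s(\ce)$ (compare $\mu^{1}_{k}=(-1)^{k}s_{k}$).

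Substituting $j=d+m$ for $m=0,\dots,d$, the $j$-th term becomes
\begin{equation*}
\frac12\binom{2d}{d+m}(-1)^{d+m}\int_{E}x^{d-m}(-\zeta)^{d+m-1},
\end{equation*}
which equals $\pm\frac12\binom{2d}{d+m}2^{d-m}\int_\S c_1(\cl)^{d-m}s_m(\S)$. Since $2^{d-m}/2=2^{d-1}/2^{m}$, this reproduces exactly the coefficient $2^{d-1}\binom{2d}{d+m}2^{-m}$ in the lemma.

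The main obstacle is the sign bookkeeping. We have to verify that $(-1)^{d+m}\cdot(-1)^{d+m-1}=-1$ combines with the Segre-class convention to give a uniform minus sign. Once the convention $\pi_{\ast}\zeta^{d-1+m}=(-1)^{m}s_{m}(T_{\S})$ or $s_{m}(T_{\S})$ is fixed consistently with the paper's notation $s_m(\S)$, this is checked by testing the smallest case, $d=1$, where $\S^{[2]}=\S^{(2)}$. There the answer is directly computable: $\int c_1(\cl^{[2]})^2=\deg(\cl)^2-\ldots$, with the correction involving $\deg\cl$ and $g-1$, and comparing the two fixes the sign.
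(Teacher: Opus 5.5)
Your proposal is correct and is essentially the paper's proof: the paper works on the flag Hilbert scheme $\S^{[1,2]}$, which is your blow-up $\mathrm{Bl}_{\Delta}(\S\times\S)$ with its degree-two map to $\S^{[2]}$. It uses that the pulled-back tautological bundle has $c_{1}=x-e$ (quoted as an extension of $\phi_{2}^{\ast}\cl$ by $\phi_{1}^{\ast}\cl(-E)$ rather than via your exact sequence), and then expands binomially with $e^{j}=\iota_{\ast}(-\zeta)^{j-1}$.

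The sign you left open needs no test case. The paper's convention is $f_{\ast}c_{1}(\co(1))^{r-1+i}=(-1)^{i}s_{i}(\ce)$ for $\Proj\Sym\ce$. Applied to $E=\bp(\Omega^{1}_{\S})$ it gives $g_{\ast}\zeta^{d-1+m}=(-1)^{m}s_{m}(\Omega^{1}_{\S})=s_{m}(T_{\S})=s_{m}(\S)$, so every correction term carries the uniform sign $(-1)^{j}(-1)^{j-1}=-1$. Also, your first display with $\tfrac12[E]$ is wrong and should simply be replaced by $c_{1}=x-e$.
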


\begin{proof}
Consider the flag Hilbert scheme
\begin{equation*}
	\begin{tikzcd}
	& \arrow[dl, swap, "\phi"] \S^{[1,2]} \arrow[dr] &    \\
	\S\times\S	& & \S^{[2]}.
	\end{tikzcd}
\end{equation*}
It is well-known, see \SS 1 and Lemma 4.1 of \cite{FaG}, that the morphism $\phi$ can be identified 
with the blow-up of $\S\times\S$ along the diagonal $\Delta$, 
and that the pullback of $\cl^{[2]}$ with respect to the 
double covering $\S^{[1,2]}\rightarrow \S^{[2]}$ is an extension of $\phi_{2}^{\ast}\cl$ by $\phi_{1}^{\ast}\cl(1)$; 
here $\phi_{i}$ is the composition of $\phi$ with the $i$-th projection $\S\times\S\rightarrow\S$. 
Hence
\begin{equation}\label{eq:FlagIntegral}
2\int_{\S^{[2]}}c_{1}(\cl^{[2]})^{2d} = \sum_{m=0}^{2d}\binom{2d}{m}
\int_{\S^{[1,2]}} \phi^{\ast}c_{1}(\cl\boxplus\cl)^{2d-m} c_{1}(\co(1))^{m}.
\end{equation}
If $\iota$ is the inclusion of the exceptional divisor $\bp(\Omega^{1}_{\S})$ of $\phi$, we have 
\begin{equation*}
c_{1}(\co(1))^{m} = -\iota_{\ast} c_{1}(\co_{\bp(\Omega^{1}_{\S})}(1))^{m-1} \quad (m\geqslant 1).
\end{equation*}
In particular
\begin{equation*}
\phi_{\ast}c_{1}(\co(1))^{m} = -\Delta_{\ast} s_{m-d}(\S) \quad (m\geqslant 1).
\end{equation*}
The only non-vanishing terms in (\ref{eq:FlagIntegral}) are thus $m=0$ and $m\geqslant d$, 
and we are left with
\begin{align*}
2\int_{\S^{[2]}}c_{1}(\cl^{[2]})^{2d} &= \binom{2d}{d}\left(\int_{\S}c_{1}(\cl)^{d}\right)^{2}
-\sum_{m=d}^{2d}\binom{2d}{m}\int_{\S\times\S} c_{1}(\cl\boxplus\cl)^{2d-m}\Delta_{\ast} s_{m-d}(\S) \\
&= \binom{2d}{d}\left(\int_{\S}c_{1}(\cl)^{d}\right)^{2}-\sum_{m=d}^{2d}\binom{2d}{m}2^{2d-m}\int_{\S} c_{1}(\cl)^{2d-m}s_{m-d}(\S). \qedhere
\end{align*}
\end{proof}

Applying \Cref{rank1} to the invertible sheaf $\cl(1)$ on $\bp(\ce)$, 
(\ref{eq:deg2}) gives
\begin{equation}\label{eq:projbundle}
\deg\varpi^{2}
= \frac{1}{2}\binom{2p}{p} I_{0}^{2} 
- 2^{p-1} \sum_{m=0}^{p}\binom{2p}{p+m} \frac{1}{2^{m}} I_{m},
\end{equation}
where we have written
\begin{equation*}
I_{m} = \int_{\bp(\ce)}c_{1}(\cl(1))^{p-m}s_{m}(\bp(\ce)).
\end{equation*}

\begin{lemma}\label{Iintegrals}
We have
\begin{equation*}
I_{m} = \sum_{k=0}^{d} (-1)^{m+k} \int_{\S} s_{k}(\S) 
\left\{ \sum_{j=0}^{d-k} 
(-1)^{j} \binom{r-1+m-k}{m-d+j} s_{d-k-j}(\ce\otimes\cl) s_{j}(\ce\otimes\cl) \right\}.
\end{equation*}
\end{lemma}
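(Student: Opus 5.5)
First twist away $\cl$. Under $\bp(\ce)\cong\bp(\ce\otimes\cl)$ the sheaf $\co_{\bp(\ce)}(1)\otimes\cl$ corresponds to $\co_{\bp(\ce\otimes\cl)}(1)$, and $\bp(\ce)$ and $\bp(\ce\otimes\cl)$ are the same scheme over $\S$. So we may replace $\ce$ by $\ce' = \ce\otimes\cl$ and take $\cl=\co$. Write $\xi = c_{1}(\co(1))$ on $q:\bp(\ce')\to\S$. Then
\begin{equation*}
I_{m} = \int_{\bp(\ce')} \xi^{p-m}\, s_{m}(\bp(\ce')).
\end{equation*}
This reduces the lemma to an integral of a polynomial in $\xi$ with coefficients pulled back from $\S$, followed by pushforward along $q$.

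Next compute the total Segre class of $\bp(\ce')$. The tangent bundle sits in $0\to T_{\bp/\S}\to T_{\bp(\ce')}\to q^{\ast}T_{\S}\to 0$, so $s(\bp(\ce')) = q^{\ast}s(\S)\cdot s(T_{\bp/\S})$. With the paper's convention $\bp(\ce')=\Proj\Sym(\ce')$, the relative Euler sequence reads $0\to\co\to q^{\ast}\ce'^{\vee}(1)\to T_{\bp/\S}\to 0$. Hence $s(T_{\bp/\S}) = c(q^{\ast}\ce'^{\vee}(1))^{-1}$. I would expand this as $\sum_{n}(-1)^{n}\,\cdot$ (complete symmetric functions in the Chern roots $\xi-\alpha_i$). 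Concretely, writing the Chern roots of $\ce'$ as $\alpha_i$:
\begin{equation*}
s(T_{\bp/\S}) = \prod_{i=1}^{r}\frac{1}{1+\xi-\alpha_i} = \sum_{n\ge 0}(-1)^{n}\sum_{a+b=n}\binom{r-1+n-b}{n-b}\,(1+\xi)^{-(r+b)}\ldots
\end{equation*}
Rather than pursue closed forms, I would expand $\prod(1+\xi-\alpha_i)^{-1} = (1+\xi)^{-r}\prod(1-\alpha_i/(1+\xi))^{-1}$. This gives
\begin{equation*}
\sum_{j\ge 0} h_j(\alpha)\,(1+\xi)^{-r-j}.
\end{equation*}
Here $h_j(\alpha)$ is, up to the sign convention relating $\ce'$ and $\ce'^\vee$, the Segre class $s_j(\ce')$. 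I will track this sign carefully, since the paper's convention gives $q_\ast\xi^{r-1+j}=(-1)^j s_j(\ce')$ according to $\mu^{1}_{k}=(-1)^k s_k$.

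Then extract the degree-$m$ part, multiply by $\xi^{p-m}$, and push forward. The degree-$m$ piece of $s_k(\S)h_j(1+\xi)^{-r-j}$ is
\begin{equation*}
s_k(\S)\,h_j\,\binom{-r-j}{m-k-j}\xi^{m-k-j}.
\end{equation*}
Multiplying by $\xi^{p-m}$ gives the power $\xi^{p-k-j}=\xi^{r-1+(d-k-j)}$. Its pushforward is $(-1)^{d-k-j}s_{d-k-j}(\ce')$. The binomial satisfies
\begin{equation*}
\binom{-r-j}{m-k-j} = (-1)^{m-k-j}\binom{r-1+m-k}{m-k-j},
\end{equation*}
and $\binom{r-1+m-k}{m-k-j}=\binom{r-1+m-k}{r-1+j}$. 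The target has $\binom{r-1+m-k}{m-d+j}$. This matches after reindexing $j\leftrightarrow d-k-j$ (which swaps the two Segre factors, a symmetric product), since then $r-1+m-k-(m-d+j') = r-1+d-k-j'$. Collecting the signs $(-1)^{m-k-j}(-1)^{d-k-j}(\pm1)^j$ should yield $(-1)^{m+k}(-1)^{j}$ after the reindexing. Summing over $k$ and $j$ and integrating over $\S$ then gives the formula.

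The main obstacle is sign bookkeeping: which of $\ce'$ or $\ce'^\vee$ appears in the Euler sequence, and the sign in $q_\ast\xi^{r-1+i}=(-1)^i s_i$. I would fix both by checking the case $d=0$, and the case $\ce'$ a line bundle with $r=1$, where $I_m$ must reduce to $\int_\S c_1(\cl)^{d-m}s_m(\S)$.
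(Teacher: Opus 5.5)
Your proposal is correct and follows the paper's proof: twist to $\ce\otimes\cl$; use the relative Euler sequence and the tangent sequence of the projection to write $s_{m}(\bp(\ce\otimes\cl))$ as $f^{\ast}s_{k}(\S)$ times a polynomial in $c_{1}(\co(1))$ with Segre-class coefficients; push forward using $f_{\ast}c_{1}(\co(1))^{r-1+i}=(-1)^{i}s_{i}(\ce\otimes\cl)$; then reindex. The sign you left open is $h_{j}(\alpha)=(-1)^{j}s_{j}(\ce\otimes\cl)$, since the paper uses $s=c^{-1}$. This gives the total sign $(-1)^{m+d-j}$, which becomes $(-1)^{m+k+j}$ after $j\mapsto d-k-j$, exactly as the statement requires.
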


\begin{proof}
Using the canonical isomorphism $\bp(\ce) \xrightarrow{\sim} \bp(\ce\otimes\cl)$ of schemes over $\S$, 
we can write $I_{m}$ as
\begin{equation*}
I_{m} = \int_{\bp(\ce\otimes\cl)}c_{1}(\co(1))^{p-m} s_{m}(\bp(\ce\otimes\cl)).
\end{equation*}
Denoting by $f:\bp(\ce\otimes\cl)\rightarrow\S$ the projection, 
the Euler sequence
\begin{equation*}
0\rightarrow\Omega^{1}_{f}\rightarrow f^{\ast}(\ce\otimes\cl)\otimes\co (-1)\rightarrow\co\rightarrow 0
\end{equation*}
and the exact sequence of differentials of $f$ give
\begin{equation*}
s_{m}(\bp(\ce\otimes\cl)) = \sum^{m}_{k=0} (-1)^{m-k} f^{\ast}s_{k}(\S) 
\sum^{m-k}_{n=0} \binom{r-1+m-k}{n} f^{\ast}s_{m-k-n}(\ce\otimes \cl) c_{1}(\co(1))^{n}.
\end{equation*}
Since
\begin{equation*}
f_{\ast} c_{1}(\co(1))^{p-m+n} = (-1)^{d-m+n} s_{d-m+n}(\ce\otimes\cl),
\end{equation*}
we are left with
\begin{equation*}
I_{m} = \sum_{k=0}^{m} (-1)^{d+k} \int_{\S} s_{k}(\S) 
\sum_{n=m-d}^{m-k} (-1)^{n} \binom{r-1+m-k}{n} s_{m-k-n}(\ce\otimes\cl) s_{d-m+n}(\ce\otimes\cl).
\end{equation*}
As the inner sum vanishes for $k>m$, the outer sum may be taken over $0\leqslant k\leqslant d$; 
reindex the inner sum using $j=n-m+d$.
\end{proof}

In particular
\begin{equation*}
I_{0} = (-1)^{d} \int_{\S} s_{d}(\ce\otimes\cl),
\end{equation*}
and to conclude the proof of \Cref{firstthm} 
it remains to rearrange the sum in (\ref{eq:projbundle}).

\subsection{Jacobi polynomials}

By \Cref{Iintegrals}, this sum can be written as
\begin{equation*}
\sum_{m=0}^{p} \binom{2p}{p+m} \frac{1}{2^{m}} I_{m} 
= \sum^{d}_{k=0} \int_{\S} s_{k}(\S) J_{d-k}(\ce\otimes\cl),
\end{equation*}
after changing the order of summation, 
with $J_{d-k}(\ce\otimes\cl)$ of the form
\begin{equation*}
J_{d-k}(\ce\otimes\cl) = \sum_{j=0}^{d-k} a_{j} s_{d-k-j}(\ce\otimes\cl) s_{j}(\ce\otimes \cl).
\end{equation*}
The sums
\begin{equation}\label{eq:jacsum}
a_{j} = (-1)^{k+j} \sum_{m=d-j}^{p} \left(-\frac{1}{2}\right)^{m} \binom{2p}{p+m} \binom{r-1+m-k}{m-d+j}
\end{equation}
are of a special type.

Consider the hypergeometric series
\begin{equation*}
_{2}F_{1}(\alpha, \beta; \gamma; z) = \sum_{m=0}^{\infty} \frac{(\alpha)_{m} (\beta)_{m}}{(\gamma)_{m}} \frac{z^{m}}{m!},
\end{equation*}
where $(x)_{m} = \prod_{i=0}^{m-1}(x+i)$. 
Jacobi \cite[\SS 3]{Jac} studied this series when $\alpha=-n$ is a negative integer, 
in which case it becomes a polynomial of degree $n$.
With the standard normalization, one defines the Jacobi polynomial as
\begin{equation*}
P^{\alpha, \beta}_{n}(z) = \frac{(\alpha+1)_{n}}{n!} {}_{2}F_{1}(-n, n+\alpha+\beta+1; \alpha+1; (1-z)/2).
\end{equation*}
For $\alpha>0$, $\beta>-n-\alpha-1$, we can write this as
\begin{equation*}
P^{\alpha, \beta}_{n}(z) = \sum^{n}_{m=0} \left(\frac{z-1}{2}\right)^{m} \binom{n+\alpha}{m+\alpha} \binom{n+\alpha+\beta+m}{m}.
\end{equation*}
After reindexing, we recognize that (\ref{eq:jacsum}) is given by the constant coefficient of such a polynomial,
\begin{equation*}
a_{j} = \frac{(-1)^{d-k}}{2^{d-j}} P^{p+d-j, -p-k-j}_{r-1+j}(0).
\end{equation*}
\section{The classes \texorpdfstring{$\bm{\mu^{l}_{k}(\ce)}$}{mu}}

\subsection{Definition}

We now associate to $\ce$ classes in the Chow ring of $\S^{(l)}$. 
For $0\leqslant k\leqslant ld$, we define
\begin{equation*}
\mu^{l}_{k}(\ce) = \mu_{\ast} c_{1}(\co^{[l]})^{l(r-1)+k} \in \A^{k}(\S^{(l)}),
\end{equation*}
where it is understood that $\mu$ and $\co^{[l]}$ are restricted to the main component.

Since $\Quot^{1}_{\S}(\ce)=\bp(\ce)$ over $\S$, this construction specializes to the Segre class
\begin{equation*}
\mu^{1}_{k}(\ce) =  (-1)^{k} s_{k}(\ce).
\end{equation*}
The definition of the classes $\mu^{l}_{k}(\ce)$ could hardly be more natural,
and yet they seem to have received very little attention, if at all, 
even in the case of the Hilbert scheme of points $\S^{[l]}$ of a surface $\S$. 

In the latter case,
\begin{equation*}
c_{1}(\co^{[l]}) = -\frac{1}{2}[\partial \S^{[l]}],
\end{equation*}
where $\partial \S^{[l]}$ is the exceptional divisor of $\mu$ \cite[Lemma 3.7]{Leh}. 
The obvious vanishing $\mu_{1}(\co)=0$ appears in the influential paper of Beauville \cite[\SS 9]{Bea}; 
his results imply
\begin{equation*}
\int_{\S^{[l]}} c_{1}(\cl^{[l]})^{2l} = \frac{(2l)!}{l! 2^{l}} \left(\int_{\S}c_{1}(\cl)^{2} + 2 - 2l\right)^{l}
\end{equation*}
when $\S$ is a K3 surface.

It is known, see \cite[Proposition 2.6]{Hai} and \cite[Theorem 7.25]{EkS}, 
that $\mu$ can be identified with the blow up of $\S^{(l)}$
along $\Delta$ (with a specific scheme structure); 
the classes $\mu_{k}(\co)$, $k\geqslant 2$, are thus given by the Segre classes of $\Delta$ in $\S^{(l)}$.

\subsection{Properties} 

Denote by $\cl^{(l)}$ be the invertible sheaf on $\S^{(l)}$ induced by the 
$\mathfrak{S}_{l}$-equivariant invertible sheaf $\cl^{\boxtimes l}$ on $\S^{l}$. 
For our purposes, the two most important properties of $\mu^{l}_{k}(\ce)$ are:

\begin{proposition}\label{muprop}
(i) We have
\begin{equation*}
\deg\varpi_{m} = \int_{\S^{(l)}} \mu_{ld}^{l}(\ce\otimes\cl).
\end{equation*}
(ii) For any invertible sheaf $\cl$ on $\S$
\begin{equation*}
\mu^{l}_{k}(\ce\otimes\cl) 
= \sum^{k}_{j=0} \binom{l(r-1)+k}{k-j} c_{1}(\cl^{(l)})^{k-j} \mu^{l}_{j}(\ce).
\end{equation*}
\end{proposition}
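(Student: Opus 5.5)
Both parts reduce to the twisting isomorphism (\ref{eq:twisting}), regarded as an isomorphism of schemes over $\S^{(l)}$, together with the projection formula for $\mu$.

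For (i), start from the expression $\deg\varpi_{m}=\int_{\Quot^{l}_{\S}(\ce)_{m}}c_{1}(\cl^{[l]})^{lp}$ established above. Under (\ref{eq:twisting}) the main components correspond, and $\cl^{[l]}$ corresponds to $\co^{[l]}$ on $\Quot^{l}_{\S}(\ce\otimes\cl)_{m}$. So the integral equals $\int_{\Quot^{l}_{\S}(\ce\otimes\cl)_{m}}c_{1}(\co^{[l]})^{l(r-1)+ld}$, using $lp=l(r-1)+ld$. Since $\mu$ is proper and degrees are preserved under proper pushforward, this equals $\int_{\S^{(l)}}\mu_{\ast}c_{1}(\co^{[l]})^{l(r-1)+ld}=\int_{\S^{(l)}}\mu^{l}_{ld}(\ce\otimes\cl)$.

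For (ii), again use (\ref{eq:twisting}) over $\S^{(l)}$. It identifies $\mu^{l}_{k}(\ce\otimes\cl)$ with $\mu_{\ast}c_{1}(\cl^{[l]})^{l(r-1)+k}$ computed on $\Quot^{l}_{\S}(\ce)_{m}$. The key input is the relation
\begin{equation*}
\det(\cl^{[l]})\cong\det(\co^{[l]})\otimes\mu^{\ast}\cl^{(l)}
\end{equation*}
on $\Quot^{l}_{\S}(\ce)$, or at least $c_{1}(\cl^{[l]})=c_{1}(\co^{[l]})+\mu^{\ast}c_{1}(\cl^{(l)})$ in $\A^{1}$ of the main component. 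Granting this, expand $(c_{1}(\co^{[l]})+\mu^{\ast}c_{1}(\cl^{(l)}))^{l(r-1)+k}$ binomially and push forward using the projection formula. The term $\binom{l(r-1)+k}{i}\mu^{\ast}c_{1}(\cl^{(l)})^{i}c_{1}(\co^{[l]})^{l(r-1)+k-i}$ pushes forward to $\binom{l(r-1)+k}{i}c_{1}(\cl^{(l)})^{i}\mu^{l}_{k-i}(\ce)$. Terms with $k-i<0$ vanish, since $\mu_{\ast}$ of a class of degree below the relative dimension $l(r-1)$ is zero. Setting $j=k-i$ gives the formula. One technical point: $\S^{(l)}$ may be singular, so $c_{1}(\cl^{(l)})^{i}$ should be read as the operational Chern class of a line bundle, which is fine for the projection formula.

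The main obstacle is the determinant identity. I would prove it on the main component, which is enough for our purposes. Over $\B(\S,l)$ the quotient is $\bigoplus\co_{s_{i}}$. Étale locally, after pulling back to the ordered configuration space, $\cl^{[l]}$ splits as $\bigoplus_{i}\cl(s_{i})\otimes\co^{[l]}_{i}$, so the determinants differ by the pullback of $\cl^{\boxtimes l}$, which descends to $\cl^{(l)}$.

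To extend across $\Delta$, the cleanest route is to use Grothendieck's norm construction of $\mu$ (as in \cite{EkS}). For a family of length $l$ quotients, $\det$ of the pushforward of $\cq\otimes\cl$ should equal $\det$ of the pushforward of $\cq$ tensored with the norm of $\cl$, and the norm of $\cl$ is precisely $\mu^{\ast}\cl^{(l)}$. Concretely, locally trivialise $\cl$ by a section $t$ near the support. Multiplication by $t$ is then an isomorphism $\co^{[l]}\to\cl^{[l]}$, and changing the trivialisation by a unit $u$ multiplies the determinant by $\det(u\cdot)$, which is the norm of $u$. Hence the transition functions of $\det\cl^{[l]}\otimes\det(\co^{[l]})^{-1}$ are norms of those of $\cl$, i.e. those of $\mu^{\ast}\cl^{(l)}$.

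If this is too delicate, a fallback suffices: both sides are line bundles on the irreducible main component agreeing on the dense open $\mu^{-1}(\B(\S,l))$. Their difference in $\A^{1}$ is then supported on the boundary, and I would show it vanishes, e.g. by the norm argument along the generic point of the boundary divisor.
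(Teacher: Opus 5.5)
Your proposal is correct and follows essentially the paper's proof. For (i) you use the twisting isomorphism and invariance of degree under proper pushforward. For (ii) you use the twisting isomorphism over $\S^{(l)}$, the relation $\det(\cl^{[l]})=\mu^{\ast}\cl^{(l)}\otimes\det(\co^{[l]})$, and the projection formula. Your local-trivialisation argument, whose transition functions are $\det(u\cdot)$, simply unpacks the norm identity $\mu^{\ast}\cl^{(l)}=\cn_{\cq/\Quot^{l}_{\S}(\ce)}(\cl)$ that the paper cites from Deligne.
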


\begin{proof}
(i) This is a consequence of the equality 
\begin{equation*}
\int_{\Quot^{l}_{\S}(\ce)_{m}} c_{1}(\cl^{[l]})^{lp} 
= \int_{\Quot^{l}_{\S}(\ce\otimes\cl)_{m}} c_{1}(\co^{[l]})^{lp}
\end{equation*}
coming from the isomorphism (\ref{eq:twisting}).

As for (ii), it follows from the construction of $\mu$ that 
\begin{equation*}
\mu^{\ast}\cl^{(l)} = \cn_{\cq/\Quot^{l}_{\S}(\ce)}(\cl).
\end{equation*}
Here $\cn_{\cq/\Quot^{l}_{\S}(\ce)}(\cl)$ is the norm, 
in the sense of \cite[\SS 7]{Del}, of (the pullback of) $\cl$ with respect to
the universal sheaf $\cq$ on $\S\times\Quot^{l}_{\S}(\ce)$.
In particular, one has the standard relation
\begin{equation}\label{eq:standard}
\det(\cl^{[l]}) = \mu^{\ast}\cl^{(l)} \otimes \det(\co^{[l]}).
\end{equation}
Since (\ref{eq:twisting}) is an isomorphism of schemes over $\S^{(l)}$,
we have the identity
\begin{equation*}
\mu^{l}_{k}(\ce\otimes\cl) = \mu_{\ast}c_{1}(\cl^{[l]})^{l(r-1)+k},
\end{equation*}
which implies the result by combining (\ref{eq:standard}) with the projection formula. 
\end{proof}

The classes $\mu^{l}_{k}(\ce)$ not only determine the Plücker integrals,
but a somewhat more general class of integrals: by (\ref{eq:standard}) we have
\begin{equation}\label{eq:multint}
\int_{\Quot^{l}_{\S}(\ce)_{m}}\prod_{m=1}^{lp}c_{1}(\cl_{m}^{[l]}) 
= \sum_{k=0}^{ld} \int_{\S^{(l)}} \sigma_{ld-k}(c_{1}(\cl_{1}^{(l)}), \ldots, c_{1}(\cl_{lp}^{(l)})) \mu_{k}^{l}(\ce),
\end{equation}
for any invertible sheaves $\cl_{1}, \ldots, \cl_{lp}$ on $\S$. 
Here $\sigma_{ld-k}$ denotes the elementary symmetric polynomial of degree $ld-k$. 

When the Picard group of $\S$ is infinite cyclic with ample generator $\co(1)$, 
writing $\cl_{m}=\co(n_{m})$, the right-hand side in (\ref{eq:multint}) becomes
\begin{equation*}
\sum_{k=0}^{ld} \sigma_{ld-k}(n_{1}, \ldots, n_{lp}) \int_{\S^{(l)}}  c_{1}(\co(1)^{(l)})^{ld-k} \mu_{k}^{l}(\ce).
\end{equation*}
In this special situation, the integral (\ref{eq:multint}), viewed as a polynomial in $n_{1}, \ldots, n_{lp}$, 
is thus determined by the Plücker integral (effectively through polarization).

\begin{example}
Consider the projective space $\S=\bp_{d}$. Identifying
\begin{equation*}
\A^{\ast}(\bp_{d}^{(l)}) = \A^{\ast}(\bp_{d}^{l})^{\mathfrak{S}_{l}}, 
\quad \A^{\ast}(\bp_{d}^{l}) = \bq[h_{1}, \ldots, h_{l}]/(h_{1}^{d+1}, \ldots, h_{l}^{d+1}),
\end{equation*}
we may view $\mu^{l}_{k}(\ce)$ as a symmetric polynomial of degree $k$ in the truncated variables 
$h_{1}, \ldots, h_{l}$. For $d=1$ one has the peculiarity that $\bp_{1}^{(l)}$ is itself a projective space, 
which implies that $\mu^{l}_{k}(\ce)$ is determined by the Plücker integral.
Indeed, since $\A^{k}(\bp_{1}^{(l)})$ has dimension one,
we can write $\mu^{l}_{k}(\ce) = a_{k}\sigma_{k}(h_{1}, \ldots, h_{l})$ for some $a_{k}\in\bq$. 
Using
\begin{equation*}
k!\sigma_{k}(h_{1}, \ldots, h_{l}) = \sigma_{1}(h_{1}, \ldots, h_{l})^{k},
\end{equation*}
we have
\begin{equation*}
\int_{\Quot^{l}_{\bp_{1}}(\ce)} c_{1}(\co(n)^{[l]})^{lr} = \sum^{l}_{k=0} n^{l-k} \binom{lr}{l-k} \frac{a_{k}}{k!}.
\end{equation*}
Since $\ce$ is split and $\Quot^{l}_{\bp_{1}}(\ce)$ is smooth, 
one can compute the Plücker integral (as a polynomial in $n$, hence the coefficients $a_k$) through torus localisation, as in \cite{OpP,OpS}.
\end{example}

\subsection{Decomposition}

Guided by \Cref{muprop} (i), we seek an explicit description of $\mu^{l}_{k}(\ce)$ 
in terms of the canonical isomorphism
\begin{equation*}
\A^{\ast}(\S^{l})^{\mathfrak{S}_{l}} \xrightarrow{\sim}\A^{\ast}(\S^{(l)})
\end{equation*}
induced by $\pi_{\ast}$.
We first introduce certain auxiliary classes. 
The projection $\bp(\ce)\rightarrow\S$ induces a unique morphism of schemes
\begin{equation*}
\nu:\bp(\ce)^{(l)} \rightarrow \S^{(l)}
\end{equation*}
such that the diagram
\begin{equation}\label{eq:square}
\begin{tikzcd}
\bp(\ce)^{l} \arrow[r] \arrow[d] & \bp(\ce)^{(l)} \arrow[d, "\nu"]   \\
\S^{l} \arrow[r, "\pi", swap] & \S^{(l)}
\end{tikzcd}
\end{equation}
commutes. Parallel to the definition of $\mu^{l}_{k}(\ce)$, we put
\begin{equation*}
\nu^{l}_{k}(\ce) = \nu_{\ast} c_{1}(\co(1)^{(l)})^{l(r-1)+k}.
\end{equation*}
These classes are easy to describe explicitly. 

\begin{lemma}\label{lambdaexplicit}
We have 
\begin{equation*}
\nu^{l}_{k}(\ce) = \frac{(-1)^{k}}{l!} \sum_{k_{1} + \cdots + k_{l} = k} 
\frac{(l(r-1)+k)!}{\prod_{m=1}^{l}(r-1+k_{m})!} \pi_{\ast}\bigtimes_{m=1}^{l} s_{k_{m}}(\ce).
\end{equation*}
\end{lemma}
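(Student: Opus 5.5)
The plan is to lift the computation from $\bp(\ce)^{(l)}$ to $\bp(\ce)^{l}$ using the commutative square (\ref{eq:square}), and then to use the projection formula for the projective bundle $\bp(\ce)\rightarrow\S$ factor by factor. Write $\tilde\pi:\bp(\ce)^{l}\rightarrow\bp(\ce)^{(l)}$ for the quotient map and $f^{l}:\bp(\ce)^{l}\rightarrow\S^{l}$ for the product of the projections $f:\bp(\ce)\rightarrow\S$. Then $\nu\circ\tilde\pi=\pi\circ f^{l}$.

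\textbf{Step 1: lift to the ordered product.} By construction of $\co(1)^{(l)}$ we have $\tilde\pi^{\ast}\co(1)^{(l)}=\co(1)^{\boxtimes l}$. Since $\tilde\pi$ is finite of degree $l!$, the projection formula gives $\tilde\pi_{\ast}\tilde\pi^{\ast}\alpha=l!\,\alpha$ with $\bq$-coefficients. Hence
\begin{equation*}
\nu^{l}_{k}(\ce)=\frac{1}{l!}\nu_{\ast}\tilde\pi_{\ast}c_{1}(\co(1)^{\boxtimes l})^{l(r-1)+k}=\frac{1}{l!}\pi_{\ast}f^{l}_{\ast}\Bigl(\sum_{i=1}^{l}h_{i}\Bigr)^{l(r-1)+k},
\end{equation*}
where $h_{i}$ is the pullback of $c_{1}(\co(1))$ from the $i$-th factor. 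Only the identity $\nu\tilde\pi=\pi f^{l}$ is used here, not any base change property of the square. This step is the only point that needs care: the pushforward and pullback along the finite quotient $\tilde\pi$ must be interpreted on the possibly singular $\bp(\ce)^{(l)}$. I would handle this by working with operational or $\bq$-Chow classes on the quotient variety, where $\tilde\pi_{\ast}\tilde\pi^{\ast}=l!$ holds.

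\textbf{Step 2: multinomial expansion.} Expand the power by the multinomial theorem:
\begin{equation*}
\Bigl(\sum_{i=1}^{l}h_{i}\Bigr)^{N}=\sum_{n_{1}+\cdots+n_{l}=N}\frac{N!}{\prod_{i} n_{i}!}\prod_{i}h_{i}^{n_{i}},
\qquad N=l(r-1)+k.
\end{equation*}

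\textbf{Step 3: push forward factor by factor.} Since $f^{l}$ is a product, the Künneth-type compatibility of exterior products with proper pushforward gives
\begin{equation*}
f^{l}_{\ast}\prod_{i}h_{i}^{n_{i}}=\bigtimes_{i}f_{\ast}c_{1}(\co(1))^{n_{i}}.
\end{equation*}
As in the proof of \Cref{Iintegrals}, $f_{\ast}c_{1}(\co(1))^{n}=(-1)^{n-r+1}s_{n-r+1}(\ce)$, and this vanishes for $n<r-1$. Setting $k_{i}=n_{i}-(r-1)\geqslant 0$ gives $\sum_{i} k_{i}=k$, and the total sign is $(-1)^{k}$. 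Substituting yields exactly the stated formula.
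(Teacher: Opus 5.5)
Your proposal is correct and follows the paper's proof: the projection formula for the degree-$l!$ quotient $\bp(\ce)^{l}\rightarrow\bp(\ce)^{(l)}$, the multinomial expansion of $c_{1}(\co(1)^{\boxtimes l})^{l(r-1)+k}$, and the square (\ref{eq:square}) combined with $f_{\ast}c_{1}(\co(1))^{r-1+i}=(-1)^{i}s_{i}(\ce)$. You spell out the factor-by-factor pushforward and the sign bookkeeping that the paper leaves implicit; in Step 1, the projection formula for Chern classes of line bundles already holds on singular schemes, so you need no further operational machinery there.
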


\begin{proof}
By the projection formula for $\bp(\ce)^{l}\rightarrow\bp(\ce)^{(l)}$, 
$c_{1}(\co(1)^{(l)})^{l(r-1)+k}$ is the pushforward of $c_{1}(\co(1)^{\boxtimes l})^{l(r-1)+k}/l!$. 
The formula results from the multinomial expansion of this latter class and the defining square (\ref{eq:square}).
\end{proof}

Put $\delta^{l}_{k}(\ce) = \mu^{l}_{k}(\ce) - \nu^{l}_{k}(\ce)$.
By the excision exact sequence 
\begin{equation*}
\A^{k-d}(\Delta)\rightarrow\A^{k}(\S^{(l)}) \rightarrow \A^{k}(\B(\S, l))\rightarrow 0,
\end{equation*}
\Cref{lastthm} is equivalent to $\delta^{l}_{k}(\ce)\vert_{\B(\S, l)} = 0 \in \A^{k}(\B(\S, l))$.

\begin{theorem}\label{configspace}
We have
\begin{equation*}
\mu^{l}_{k}(\ce)\vert_{\B(\S, l)} = \nu^{l}_{k}(\ce)\vert_{\B(\S, l)}.
\end{equation*}
\end{theorem}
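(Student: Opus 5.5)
Over the configuration space $\B(\S, l)$, both morphisms $\mu$ and $\nu$ become the same family, and I will identify them compatibly with the line bundles. Concretely, I will construct an isomorphism of schemes over $\B(\S, l)$,
\begin{equation*}
\psi:\nu^{-1}(\B(\S, l))\xrightarrow{\sim}\mu^{-1}(\B(\S, l))\subset\Quot^{l}_{\S}(\ce)_{m},
\end{equation*}
such that $\psi^{\ast}\det(\co^{[l]}) \cong \co(1)^{(l)}$ on $\nu^{-1}(\B(\S,l))$. Granting this, flat pullback along the open immersion $\B(\S,l)\subset\S^{(l)}$ commutes with proper pushforward. The restriction of $\mu_{\ast}c_{1}(\co^{[l]})^{l(r-1)+k}$ to $\B(\S,l)$ is then the pushforward from $\mu^{-1}(\B(\S,l))$, and $\psi$ identifies this with the corresponding restriction of $\nu^{l}_{k}(\ce)$. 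Note that $\mu^{-1}(\B(\S,l))$ lies in the main component, since the main component is by definition the closure of this open set.

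To construct $\psi$, I work first on the ordered level. Let $U\subset\S^{l}$ be the complement of the big diagonal, so that $U\to\B(\S,l)$ is an étale Galois $\mathfrak{S}_{l}$-cover. Over $U$ I pull back the families:
\begin{enumerate}
\item on the $\nu$ side, $\bp(\ce)^{l}\vert_{U}$ is the quotient cover of $\nu^{-1}(\B(\S,l))$, because $\mathfrak{S}_{l}$ acts freely over $U$;
\item on the $\mu$ side, a quotient of $\ce$ with support cycle $s_{1}+\cdots+s_{l}$, where the $s_i$ are distinct, is a direct sum $\bigoplus_{i}\ce\to\cq_{i}$ of length-one quotients at the points $s_i$.
\end{enumerate}
This gives a morphism $\bp(\ce)^{l}\vert_{U}\to\Quot^{l}_{\S}(\ce)$, defined on $T$-points by sending $(q_1,\ldots,q_l)$ to $\ce\to\bigoplus_i \cq_i$. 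Over $U$ the supports of the $\cq_i$ are pairwise disjoint, so the direct sum is again $T$-flat of length $l$. This morphism is $\mathfrak{S}_{l}$-invariant, so it descends to $\nu^{-1}(\B(\S,l))\to\mu^{-1}(\B(\S,l))$.

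To see that the descended map is an isomorphism, I construct the inverse étale-locally. Pulling $\Quot$ back along $U\to\B(\S,l)$, the universal quotient splits canonically as a sum of its restrictions to the $l$ disjoint sections, using idempotents of the finite étale support. Each summand is a length-one quotient, that is, a point of $\bp(\ce)$ over $s_i$. This gives an equivariant inverse over $U$, which descends by étale descent.

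For the line bundles, on $U$ we have $\co^{[l]}=\bigoplus_{i}\mathrm{pr}_{i}^{\ast}\co^{[1]}$, and $\co^{[1]}$ corresponds to $\co(1)$ on $\bp(\ce)$ by the last remark of \SS 1. Hence $\det\co^{[l]}$ pulls back to $\co(1)^{\boxtimes l}$, with its natural $\mathfrak{S}_{l}$-linearization. There is a possible sign twist from permuting factors in the determinant. I will use rational coefficients and only first Chern classes: the pullback of $c_1$ to the free quotient agrees, and pullback to $\bp(\ce)^{l}\vert_U$ is injective on $\A^{\ast}\otimes\bq$. It therefore suffices that $c_{1}$ agree after pulling back to $U$, which it does.

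The main obstacle is making the scheme-theoretic identification precise, namely that $\mu^{-1}(\B(\S,l))$ really is this étale-locally split family (including the behaviour of the norm morphism $\mu$ over $\B(\S,l)$). The descent of the determinant is only a minor point, handled as above.
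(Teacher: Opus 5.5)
Your proof is correct and has the same overall strategy as the paper's. Both use the fact that restriction to opens commutes with proper pushforward to reduce the statement to an isomorphism $\nu^{-1}(\B(\S,l))\cong\mu^{-1}(\B(\S,l))$ over $\B(\S,l)$ that matches the relevant first Chern classes over $\bq$. The steps are carried out differently, though.

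The paper regards $\nu^{-1}(\B(\S,l))$ as an open subscheme of $\bp(\ce)^{[l]}$ via the Hilbert--Chow isomorphism over $\B(\bp(\ce),l)$. It pushes the universal quotient $\co(1)\to\co_{\cz}(1)$ forward along $f\times 1$, and concludes because the resulting map is bijective between smooth schemes. For the Chern classes it combines base change of tautological sheaves, the norm relation $\det(\cl^{[l]})=\mu^{\ast}\cl^{(l)}\otimes\det(\co^{[l]})$ on $\bp(\ce)^{[l]}$, and the rational vanishing of $c_{1}(\co^{[l]})$ over the \'etale locus.

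You instead work on the ordered cover $\bp(\ce)^{l}\vert_{U}$ and descend the direct-sum map by $\mathfrak{S}_{l}$-invariance. You then write down the inverse explicitly by splitting the universal quotient along the disjoint sections. This avoids both the Hilbert scheme of $\bp(\ce)$ and the Zariski-main-theorem argument hidden in ``bijective between smooth schemes''. For the Chern classes you pull back along the Galois cover $g$, where both bundles become $\co(1)^{\boxtimes l}$, and use $g_{\ast}g^{\ast}=l!$, so you never need the norm relation. The paper's route is shorter given the Hilbert--Chow isomorphism and the norm relation, and it parallels the morphism $\bp(\ce)^{[2]}\to\Quot^{2}_{\S}(\ce)$ used in Section 2. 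Yours is more self-contained.

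Two small remarks.

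First, the isomorphism $\psi^{\ast}\det(\co^{[l]})\cong\co(1)^{(l)}$ that you announce at the outset actually fails for $l\geqslant 2$. The two $\mathfrak{S}_{l}$-linearizations of $\co(1)^{\boxtimes l}$ differ by the sign character. There is no sign-alternating unit on $\bp(\ce)^{l}\vert_{U}$, so the resulting 2-torsion bundle is nontrivial. However, you only ever use equality of rational $c_{1}$, which your cover argument does prove. This torsion is exactly what the paper also discards by working over $\bq$.

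Second, the scheme-theoretic worry you flag at the end is harmless. $\mu^{-1}(\B(\S,l))$ is smooth, which the paper also uses. Hence its pullback to $U$ is reduced, and the support maps of your split summands coincide with the sections $s_{i}$ because they agree on closed points.
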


\begin{proof}
Writing $\mu^{'}:\mu^{-1}(\B(\S, l)) \rightarrow \B(\S, l)$ and 
$\nu^{'}:\nu^{-1}(\B(\S, l)) \rightarrow \B(\S, l)$ for the restrictions 
of $\mu$ and $\nu$ to configuration space, we have
\begin{align*}
\mu^{l}_{k}(\ce)\vert_{\B(\S, l)} 
&= \mu^{'}_{\ast}c_{1}(\co^{[l]}\vert_{\mu^{-1}(\B(\S, l))} )^{l(r-1)+k}, \\
\nu^{l}_{k}(\ce)\vert_{\B(\S, l)} 
&= \nu^{'}_{\ast}c_{1}(\co(1)^{(l)}\vert_{\nu^{-1}(\B(\S, l))} )^{l(r-1)+k}.
\end{align*}
It suffices to construct an isomorphism 
\begin{equation}
\rho:\nu^{-1}(\B(\S, l)) \rightarrow \mu^{-1}(\B(\S, l))
\end{equation}
of schemes over $\B(\S, l)$ satisfying
\begin{equation}\label{eq:invertibles}
\rho^{\ast}c_{1}(\co^{[l]}\vert_{\mu^{-1}(\B(\S, l))}) 
= c_{1}(\co(1)^{(l)}\vert_{\nu^{-1}(\B(\S, l))}).
\end{equation}

Our construction is parallel to the one we used in the proof of \cite[Theorem 3.3]{Sta}.
Since the Hilbert-Chow morphism 
\begin{equation*}
\bp(\ce)^{[l]} \rightarrow \bp(\ce)^{(l)}
\end{equation*}
is an isomorphism over $\B(\bp(\ce), l)$, 
and $\nu^{-1}(\B(\S, l)) \subset \B(\bp(\ce), l)$,
we can view $\nu^{-1}(\B(\S,l))$ as an open subscheme of $\bp(\ce)^{[l]}$, 
whose inclusion we denote by $j$.
Consider the quotient $\co(1) \rightarrow \co_{\cz}(1)$ on $\bp(\ce)\times\bp(\ce)^{[l]}$,
where $\cz\subset\bp(\ce)\times\bp(\ce)^{[l]}$ is the universal subscheme.
Writing $f:\bp(\ce)\rightarrow\S$ for the projection, 
taking $(f\times1)_{\ast}(1\times j)^{\ast}$ of the quotient $\co(1) \rightarrow \co_{\cz}(1)$, 
we obtain a flat quotient of sheaves
\begin{equation*}
\ce\rightarrow (f\times1)_{\ast}(1\times j)^{\ast}\co_{\cz}(1)
\end{equation*}
on $\S\times\nu^{-1}(\B(\S,l))$, 
and a corresponding morphism
\begin{equation}\label{eq:rho}
\nu^{-1}(\B(\S,l)) \rightarrow \Quot^{l}_{\S}(\ce)
\end{equation}
Identifying $\bp(\ce)=\Quot^{1}_{\S}(\ce)$, 
a point in $\nu^{-1}(\B(\S,l))$ is given by $l$ quotients
$\ce\rightarrow\co_{s_{i}}$ for pairwise distinct points $s_{1}, \ldots, s_{l}$ of $\S$; 
the morphism (\ref{eq:rho}) takes this point to the quotient $\ce\rightarrow\bigoplus_{i=1}^{l} \co_{s_{i}}$ of $\Quot^{l}_{\S}(\ce)$. 
Since any point of $\mu^{-1}(\B(\S, l))$ is of this form, (\ref{eq:rho}) induces a bijection onto $\mu^{-1}(\B(\S, l))$ and hence 
(since both $\nu^{-1}(\B(\S,l))$ and $\mu^{-1}(\B(\S, l))$ are smooth) the isomorphism $\rho$. 
It is clear from our description that $\rho$ is an isomorphism over $\B(\S, l)$, 
and by using the base change property of tautological sheaves \cite[Lemma 2.2]{Sta} for the Cartesian squares
\begin{equation*}
\begin{tikzcd}
\S\times\nu^{-1}(\B(\S, l))  \arrow[r] \arrow[d] & \S\times\Quot^{l}_{\S}(\ce) \arrow[d]   \\
\nu^{-1}(\B(\S, l)) \arrow[r] & \Quot^{l}_{\S}(\ce)
\end{tikzcd}
\end{equation*}
and 
\begin{equation*}
\begin{tikzcd}
\bp(\ce)\times\nu^{-1}(\B(\S, l)) \arrow[r] \arrow[d] & \bp(\ce)\times\bp(\ce)^{[l]} \arrow[d]   \\
\nu^{-1}(\B(\S, l)) \arrow[r] & \bp(\ce)^{[l]},
\end{tikzcd}
\end{equation*}
we obtain a canonical isomorphism
\begin{equation*}
\rho^{\ast}\co^{[l]}\vert_{\mu^{-1}(\B(\S, l))} \xrightarrow{\sim} \co(1)^{[l]}\vert_{\nu^{-1}(\B(\S, l))}.
\end{equation*}

Since the universal family of $\bp(\ce)^{[l]}$ is étale over $\B(\bp(\ce), l)$, 
we have the vanishing $c_{1}(\co^{[l]} \vert_{\B(\bp(\ce), l)}) = 0$ (over $\bq$). 
Using (\ref{eq:standard}) for $\bp(\ce)^{[l]}\rightarrow\bp(\ce)^{(l)}$, 
we obtain 
\begin{equation*}
c_{1}(\co(1)^{[l]}\vert_{\B(\bp(\ce), l)}) = c_{1}(\co(1)^{(l)}\vert_{\B(\bp(\ce), l)}),
\end{equation*}
which implies (\ref{eq:invertibles}) as $\nu^{-1}(\B(\S, l)) \subset \B(\bp(\ce), l)$.
\end{proof}

\begin{corollary}\label{munueq}
\begin{equation*}
\mu^{l}_{k}(\ce) = \nu^{l}_{k}(\ce) \quad (0\leqslant k\leqslant d-1)
\end{equation*}    
\end{corollary}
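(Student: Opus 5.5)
The corollary should follow from \Cref{configspace} together with the excision exact sequence
\begin{equation*}
\A^{k-d}(\Delta)\rightarrow\A^{k}(\S^{(l)}) \rightarrow \A^{k}(\B(\S, l))\rightarrow 0
\end{equation*}
recalled just before that theorem. By \Cref{configspace}, $\delta^{l}_{k}(\ce)=\mu^{l}_{k}(\ce)-\nu^{l}_{k}(\ce)$ restricts to zero on $\B(\S,l)$. Exactness then says that $\delta^{l}_{k}(\ce)$ is the pushforward of a class in $\A^{k-d}(\Delta)$. This is just \Cref{lastthm}.

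The remaining step is a dimension count. Recall that $\Delta$ is the closed subscheme of $\S^{(l)}$ where at least two points coincide. It is the image of the big diagonal of $\S^{l}$, so it has pure dimension $(l-1)d$ and codimension $d$ in $\S^{(l)}$. I would index Chow groups by dimension to avoid any ambiguity on singular schemes, since $\Delta$ and $\S^{(l)}$ may be singular. Then $\A^{k}(\S^{(l)})=\A_{ld-k}(\S^{(l)})$, and the term coming from $\Delta$ is $\A_{ld-k}(\Delta)$. Because $\dim\Delta=(l-1)d$, the group $\A_{ld-k}(\Delta)$ vanishes whenever $ld-k>(l-1)d$, that is, whenever $k<d$. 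Hence $\A^{k}(\S^{(l)})\rightarrow\A^{k}(\B(\S,l))$ is an isomorphism for $0\leqslant k\leqslant d-1$. Since $\delta^{l}_{k}(\ce)$ restricts to zero, it must vanish, which gives $\mu^{l}_{k}(\ce)=\nu^{l}_{k}(\ce)$.

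The only point needing care is this grading: writing $\A^{k-d}(\Delta)$ implicitly uses codimension in $\Delta$, and for $k<d$ this means a negative codimension, that is, dimension exceeding $\dim\Delta$. So the group is zero. For $l=1$ we have $\Delta=\emptyset$ and the statement is the tautology $\mu^{1}_{k}(\ce)=(-1)^{k}s_{k}(\ce)=\nu^{1}_{k}(\ce)$. I expect no real obstacle here; all the geometric content is in \Cref{configspace}.
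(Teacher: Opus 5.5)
Your proposal is correct and is the paper's argument: \Cref{configspace} gives $\delta^{l}_{k}(\ce)\vert_{\B(\S,l)}=0$. The excision sequence $\A^{k-d}(\Delta)\rightarrow\A^{k}(\S^{(l)})\rightarrow\A^{k}(\B(\S,l))\rightarrow 0$ then forces $\delta^{l}_{k}(\ce)=0$ for $k<d$, since $\dim\Delta=(l-1)d$ makes the left-hand group vanish; the paper calls this ``for degree reasons'', and your grading by dimension, $\A_{ld-k}(\Delta)=0$ for $k<d$, is the right way to make it precise on the singular schemes $\Delta$ and $\S^{(l)}$.
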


Also, since $\Delta\subset\S^{(l)}$ is irreducible,
\begin{equation*}
\delta^{l}_{d}(\ce) = c\pi_{\ast}\sum_{i<j}[\Delta_{ij}]   
\end{equation*}
for some constant $c$, where $\Delta_{ij}\subset\S^{l}$ is the locus of $(s_{1}, \ldots, s_{l})$ with $s_{i}=s_{j}$.
\begin{corollary}
\begin{equation*}
\mu^{l}_{k}(\co^{\oplus r}) = 0 \quad (1\leqslant k\leqslant d-1)
\end{equation*}   
\end{corollary}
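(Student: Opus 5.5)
By \Cref{munueq}, for $1\leqslant k\leqslant d-1$ we have $\mu^{l}_{k}(\co^{\oplus r}) = \nu^{l}_{k}(\co^{\oplus r})$. It therefore suffices to show that $\nu^{l}_{k}(\co^{\oplus r})=0$ in this range, and I will read this off from the explicit formula of \Cref{lambdaexplicit}.

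Since $\co^{\oplus r}$ is trivial, its total Segre class is $s(\co^{\oplus r}) = c(\co^{\oplus r})^{-1} = 1$. Hence $s_{j}(\co^{\oplus r}) = 0$ for every $j\geqslant 1$, while $s_{0}=1$. In the sum of \Cref{lambdaexplicit} over tuples $k_{1}+\cdots+k_{l}=k$, a tuple can contribute only if every $k_{m}$ equals $0$. When $k\geqslant 1$ no such tuple exists, so every exterior product $\bigtimes_{m} s_{k_{m}}(\co^{\oplus r})$ contains a vanishing factor. This gives $\nu^{l}_{k}(\co^{\oplus r})=0$ for all $k\geqslant 1$, and in particular for $1\leqslant k\leqslant d-1$.

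There is also a more geometric way to see the vanishing of $\nu^{l}_{k}(\co^{\oplus r})$. Here $\bp(\co^{\oplus r}) = \S\times\bp_{r-1}$ and $\co(1)$ is pulled back from $\bp_{r-1}$. Consequently $c_{1}(\co(1)^{\boxtimes l})^{l(r-1)+k}$ is pulled back from $\bp_{r-1}^{l}$, where it vanishes because its degree exceeds $\dim \bp_{r-1}^{l} = l(r-1)$ once $k\geqslant 1$.

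No step is a real obstacle. The only care needed is the range: the statement stops at $k=d-1$ because \Cref{munueq}, which rests on \Cref{configspace} and the excision sequence, is available only for $k<d$. For $k\geqslant d$ the class $\delta^{l}_{k}$ may be nonzero, which is consistent with Lehn's $c_{1}(\co^{[l]})=-\tfrac12[\partial\S^{[l]}]$ for surfaces.
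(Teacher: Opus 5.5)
Your proof is correct and is essentially the paper's argument: \Cref{munueq} reduces the claim to $\nu^{l}_{k}(\co^{\oplus r})=0$, and this follows from \Cref{lambdaexplicit} because every term contains a factor $s_{k_m}(\co^{\oplus r})$ with $k_m\geqslant 1$, which vanishes. Your geometric alternative via $\bp(\co^{\oplus r})=\S\times\bp_{r-1}$ is also valid but not needed.
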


Indeed, $\nu^{l}_{k}(\co^{\oplus r}) = 0$ for $k\geqslant 1$ by \Cref{lambdaexplicit}.

\begin{example}\label{lowmu}
We have
\begin{equation*}
\mu^{l}_{0}(\ce) = \frac{(l(r-1))!}{(r-1)!^{l}}
\end{equation*}
as well as, using $c_{1}(\ce)=c_{1}(\det(\ce))$ and $\pi_{\ast}c_{1}(\det(\ce)^{\boxtimes l}) = l! c_{1}(\det(\ce)^{(l)})$,
\begin{equation*}
\mu^{l}_{1}(\ce) = \frac{(l(r-1)+1)!}{(r-1)!^{l-1}r!}c_{1}(\det(\ce)^{(l)}) \quad (d\geqslant 2).
\end{equation*}
\end{example}

\begin{remark}
(i) The twisting property of \Cref{muprop} (ii) is also satisfied by the classes $\nu^{l}_{k}(\ce)$, 
hence also by $\delta_{k}^{l}(\ce)$.

(ii) Given a very ample invertible sheaf $\co(1)$, let $\cl=\co(n)$. Then
\begin{equation*}
\deg\varpi_{m} = \int_{\S^{(n)}}\mu^{l}_{ld}(\ce(n))
= \sum^{ld}_{j=0} \binom{lp}{ld-j} n^{ld-j} \int_{\S^{(l)}} 
c_{1}(\co(1)^{(l)})^{ld-j} \mu^{l}_{j}(\ce),
\end{equation*}  
using \Cref{muprop} (ii). 
\Cref{munueq} thus determines the top $d-1$ coefficients of $\deg\varpi_{m}$ as a polynomial in $n$.
\end{remark}

We also obtain
\begin{align*}
\deg\varpi_{m} &= \int_{\S^{(l)}} \nu_{ld}^{l}(\ce\otimes\cl) + 
\int_{\S^{(l)}} \delta^{l}_{ld}(\ce\otimes\cl) \\
&= (-1)^{ld}\frac{(lp)!}{l!(p)!^{l}}\left(\int_{\S} s_{d}(\ce\otimes\cl)\right)^{l} 
+ \int_{\S^{(l)}} \delta^{l}_{ld}(\ce\otimes\cl).
\end{align*}
The exact nature of the classes $\delta_{k}^{l}(\ce)$ is not clear to us;
comparing the above equation with \Cref{firstthm}, 
we see that $\delta^{l}_{ld}(\ce\otimes\cl)$ must be fairly complicated.

We expect that there should be an explicit formula 
for the classes $\delta_{k}^{l}(\ce)$ in terms of the Segre classes of $\ce$ and $\S$, 
and diagonal classes. 
The latter classes have favourable properties when $\S$ is a K3 surface \cite{BeV} 
(see also \cite{Voi} and \cite{MaN}); 
for $d=1$ it may also be interesting to consider the corresponding problem in cohomology. 
(The cohomology of $\Quot^{l}_{\S}(\ce)$ has been studied in \cite{Mar}). 

\subsection*{Acknowledgments}

This work was partially supported by the Simons Foundation (grant 817244), 
the EPSRC (grant EP/R014604/1), and the ERC (grant 786580).

\end{document}